\newcommand{\fsets}{{\bf Sets}_{\omega}}
\newcommand{\C}{\mathscr{C}}
\newcommand{\E}{\mathscr{E}}
\newcommand{\ff}{\mathscr{F}}
\newcommand{\G}{\mathscr{G}}
\newcommand{\cl}{\mathscr{B}_f}
\newcommand{\idp}{\mathfrak{p}}
\newcommand{\idq}{\mathfrak{q}}
\newcommand{\ido}{\mathfrak{o}}
\newcommand{\idf}{\mathfrak{f}}
\newcommand{\ida}{\mathfrak{a}}
\newcommand{\idb}{\mathfrak{b}}
\newcommand{\idP}{\mathfrak{P}}
\newcommand{\idA}{\mathfrak{A}}
\newcommand{\ratf}{\mathbb{Q}}
\newcommand{\fld}{\mathbb{F}}
\newcommand{\F}{\mathrm{F}}
\newcommand{\integer}{\mathbb{Z}}
\newcommand{\nat}{\mathbb{N}}
\newcommand{\idd}{\mathfrak{d}}
\newtheorem{thm}{Theorem}[section]
\newtheorem{lem}{Lemma}[section]
\newtheorem{prop}{Proposition}
\newtheorem{cor}{Corollary}
\theoremstyle{definition}
\newtheorem{defn}{Definition}
\newtheorem{ex}{Example}
\theoremstyle{remark}
\newtheorem{rem}{Remark}
\numberwithin{equation}{section}
\begin{document}
\title{Semi-galois Categories II:\\ An arithmetic analogue of Christol's theorem}
\author{Takeo Uramoto\\Graduate School of Information Sciences, Tohoku University}


\maketitle
\begin{abstract}
In connection with our previous work on \emph{semi-galois categories} \cite{Uramoto16,Uramoto17}, this paper proves an arithmetic analogue of \emph{Christol's theorem} concerning an automata-theoretic characterization of when a formal power series $\xi = \sum \xi_n t^n \in \fld_q[[t]]$ over finite field $\fld_q$ is algebraic over the polynomial ring $\fld_q[t]$. There are by now several variants of Christol's theorem, all of which are concerned with rings of positive characteristic. This paper provides an arithmetic (or $\fld_1$-) variant of Christol's theorem in the sense that it replaces the polynomial ring $\fld_q[t]$ with the ring $O_K$ of integers of a number field $K$ and the ring $\fld_q[[t]]$ of formal power series with the ring of Witt vectors. We also study some related problems. 
\end{abstract}
\section{Introduction}
\label{s1}
\noindent
The purpose of this paper is to prove an arithmetic analogue of the following theorem due to Christol \cite{Christol,CKFG}: Let $\fld_q$ be the finite field of $q$ elements and denote by $\fld_q[[t]]$ the ring of formal power series over $\fld_q$; also by $\fld_q[t] \subseteq \fld_q[[t]]$ the polynomial ring. Then \emph{Christol's theorem} states as follows (see \S \ref{s3s1} for undefined terminology here):

\begin{thm}[Christol's theorem]
 A formal power series $\xi = \sum \xi_n t^n \in \fld_q[[t]]$ is algebraic over $\fld_q[t]$ if and only if the coefficients $(\xi_n) \in \fld_q^{\nat_{\geq 0}}$ can be generated by some deterministic finite automaton (cf.\ \S \ref{s3s1}). 
\end{thm}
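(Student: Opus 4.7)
The plan is to establish both directions of Christol's theorem through a finiteness criterion using the \emph{Cartier operators}. For each $i \in \{0,1,\ldots,q-1\}$, define $\Lambda_i : \fld_q[[t]] \to \fld_q[[t]]$ by $\Lambda_i\bigl(\sum_n a_n t^n\bigr) = \sum_n a_{qn+i}\, t^n$. Since Frobenius acts trivially on $\fld_q$, one has the two fundamental identities $\xi = \sum_{i=0}^{q-1} t^i\, \Lambda_i(\xi)^q$ and $\Lambda_i(f \cdot g^q) = \Lambda_i(f) \cdot g$. The first step is to prove the key equivalence that $\xi$ is $q$-automatic if and only if its \emph{Cartier orbit} $\mathrm{Orb}(\xi) := \{\Lambda_{i_k}\cdots\Lambda_{i_1}(\xi) : k \geq 0,\ i_j \in \{0,\ldots,q-1\}\}$ is finite: given a DFA on alphabet $\{0,\ldots,q-1\}$ with $\fld_q$-valued output, one packages its states as a family of series closed under the $\Lambda_i$; conversely, a finite Cartier orbit is itself the state set of a DFA reading base-$q$ digits of $n$, with the ``constant-term'' map as output function.

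For the direction \textbf{automatic $\Rightarrow$ algebraic}, assume $\mathrm{Orb}(\xi)$ spans a finite-dimensional $\fld_q$-space $V \subseteq \fld_q[[t]]$. Iterated use of the decomposition $\xi = \sum_i t^i\, \Lambda_i(\xi)^q$, together with closure of $V$ under the $\Lambda_i$, forces all Frobenius iterates $\xi^{q^k}$ to lie in a single finitely generated $\fld_q[t]$-module. Hence the sequence $1, \xi, \xi^q, \xi^{q^2}, \ldots$ satisfies a nontrivial linear relation over $\fld_q(t)$, which rearranges into a polynomial equation for $\xi$ over $\fld_q[t]$ and yields algebraicity.

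For the more subtle direction \textbf{algebraic $\Rightarrow$ automatic}, start from $P(t,\xi) = 0$ for some nonzero $P \in \fld_q[t,X]$. Exploiting the characteristic-$p$ technique of raising the minimal equation to a sufficient $q$-power and collecting the resulting $q$-th powers of coefficients into $q$-th powers of $t$, I would reduce to a linearized ``Ore-type'' equation $\sum_{i=0}^{d} P_i(t)\, \xi^{q^i} = 0$ with $P_0 \neq 0$. This exhibits $\xi$ as an element of a finitely generated $\fld_q[t, 1/P_0]$-module $M$ that is stable under multiplication by $t$ and under Frobenius. Applying the identity $\Lambda_i(f g^q) = \Lambda_i(f) g$ together with the decomposition of any polynomial into $q$-adic pieces, one shows, after possibly inverting a further finite set of polynomials, that $M$ is also stable under the $\Lambda_i$. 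Consequently the Cartier orbit of $\xi$ lies in a finite-dimensional $\fld_q$-subspace, is finite, and yields the desired DFA via the first step.

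The \textbf{main obstacle} will be the algebraic $\Rightarrow$ automatic direction: one must control which polynomial denominators appear under iterated application of the $\Lambda_i$, and must ensure that the resulting module produces not just a finitely generated module over an enlarged polynomial ring, but a genuinely finite-dimensional $\fld_q$-span containing the entire Cartier orbit. A cleaner formulation of this point is to show that every orbit element satisfies a uniform bound on the $t$-degree of its ``coefficient data'' in a fixed presentation of $M$, from which finiteness of the orbit follows. Once this bookkeeping is settled, the DFA construction -- states equal to the Cartier derivatives, transitions given by the $\Lambda_i$ actions, output given by the constant term -- is entirely formal.
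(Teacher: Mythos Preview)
The paper does not actually prove this theorem: Christol's theorem is quoted as a classical result (with references to \cite{Christol,CKFG,Allouche_Shallit,Bridy,Speyer}), and \S\ref{s3s1} only gives a one-paragraph sketch of the standard strategy. That sketch introduces the $\fld_p$-linear operators
\[
\rho_i\Bigl(\sum_n \xi_n t^n\Bigr) := \sum_n \xi_{pn+i}^{1/p}\, t^n \qquad (0\le i\le p-1)
\]
and states that the key step is the finiteness of the orbit $\{\rho_{i_1}\cdots\rho_{i_r}(\xi)\}$ when $\xi$ is algebraic. Your proposal is exactly this classical approach, so it is consistent with what the paper records.

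There are two cosmetic differences worth noting. First, you work with the base-$q$ operators $\Lambda_i(\sum a_n t^n)=\sum a_{qn+i}t^n$, whereas the paper's sketch uses base-$p$ operators $\rho_i$ that incorporate a $p$-th root; since $a^q=a$ in $\fld_q$, your $\Lambda_i$ is the $f$-fold composite of the $\rho_i$ up to the Frobenius twist, and the two formulations are interchangeable. Second, the paper defines automaticity via base-$p$ expansions $[n]_p$, while your DFA reads base-$q$ digits; the equivalence of $p$- and $q$-automaticity for $q=p^f$ is standard (Cobham/Eilenberg), but strictly speaking you should either note this or align your alphabet with the paper's convention.

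Your outline itself is sound. The one place to tighten is the ``automatic $\Rightarrow$ algebraic'' paragraph: the sentence ``all Frobenius iterates $\xi^{q^k}$ lie in a single finitely generated $\fld_q[t]$-module'' is not literally what the decomposition $\xi=\sum_i t^i \Lambda_i(\xi)^q$ gives you. What it gives is $V\subseteq \fld_q[t]\cdot V^{[q]}$, hence by iteration $V\subseteq \fld_q[t]\cdot V^{[q^m]}$ for every $m$; the linear dependence of $1,\xi,\xi^q,\dots,\xi^{q^d}$ over $\fld_q(t)$ then follows because all of these lie in the finite-dimensional $\fld_q(t)$-space $\fld_q(t)\cdot V$ after one observes $\xi^{q^k}\in \fld_q(t)\cdot V$ for each $k$ (e.g.\ by writing $\xi^{q^k}$ in terms of the $v_j^{q^k}$ and then pushing back down via the inclusion $V\subseteq \fld_q[t]\cdot V^{[q^k]}$ read in reverse). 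This is routine to fix, but as written the module is not ``single''. For the harder direction you have correctly identified the genuine obstacle (denominator control after applying the $\Lambda_i$), and the Ore-equation reduction you describe is the standard way through it.
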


\noindent
This theorem lies at the intersection of number theory and automata theory; in the number theoretic context, automata theory typically has provided ideas to classify number-theoretic objects (such as formal power series \cite{Christol,CKFG,Kedlaya}, or expansions of real numbers by integer base \cite{Cobham,Hartmanis_Stearns,Adamczewski_Bugeaud1} and contined fraction \cite{Adamczewski_Bugeaud2}) from the viewpoint of computational complexity: Automata theorists ask how computationally complex it is to produce sequences of coefficients of formal power series or expansions of real numbers; and measure their complexity in terms of hierarchies of computational models that generate them. Christol's theorem is one of pioneering results of this direction, which characterized algebraicity of formal power series over $\fld_q[t]$ in terms of \emph{deterministic finite automata}--- the simplest computational models among others (such as Turing machines); this theorem provided a reasonable criterion to study transcendence of formal power series \cite{Allouche_Thakur}.

Several variants of Christol's theorem have been investigated in the literature, all of which were concerned with rings of positive characteristic. (See \cite{Allouche_Shallit} for more information on Christol's theorem and its variants.) The current paper is concerned with proving an \emph{arithmetic (or $\fld_1$-) analogue} of Christol's theorem in the sense that the polynomial ring $\fld_q[t]$ is replaced with the ring $O_K$ of integers of a number field $K$; in this variant, the ring $\fld_q[[t]]$ of formal power series is replaced with the ring of \emph{Witt vectors} \cite{Borger1} (cf.\ \S \ref{s2}). More formally, our variant of Christol's theorem claims as follows (see \S \ref{s3s2} for undefined terminology here): 

\begin{thm}[Arithmetic analogue of Christol's theorem]
 A Witt vector $\xi = (\xi_\ida) \in W_{O_K}(O_{\bar{K}})$ is integral over $O_K$ if and only if its coefficients $(\xi_\ida) \in O_{\bar{K}}^{I_K}$ can be generated by some deterministic finite automaton (cf.\ \S \ref{s3s2}). 
\end{thm}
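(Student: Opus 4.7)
The plan is to mimic the structure of the classical Christol theorem, in which one characterizes algebraicity of $\xi = \sum \xi_n t^n \in \fld_q[[t]]$ by the finiteness of the $\fld_q(t)$-span of its iterated Cartier/Frobenius ``decimations'' $\xi \mapsto \sum \xi_{qn+i} t^n$, and recognizes that this span precisely plays the role of the state set of the DFA generating $(\xi_n)$. The arithmetic version must replace the single Frobenius $F_q$ by the commuting family of Adams/Frobenius operators $\{\psi_\idp\}$ on $W_{O_K}(O_{\bar K})$ indexed by the nonzero prime ideals of $O_K$, so that the ``DFA'' reads a word over the infinite alphabet $\spec(O_K)\setminus\{0\}$ representing a factorization of an ideal $\ida \in I_K$ and outputs the coefficient $\xi_\ida$ at the end state. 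The coherence of these operators (independence of order of primes, multiplicativity in $\ida$) is precisely what makes the state set of such an automaton a well-defined finite set rather than a tree of partial histories.

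For the direction $(\Leftarrow)$, given a DFA $\mathcal{A}$ generating $(\xi_\ida)$, let $Q$ be its state set, equipped with commuting actions of each $\idp$ and an output map $Q \to O_{\bar K}$. Each state $q \in Q$ assembles into its own Witt vector $\xi_q \in W_{O_K}(O_{\bar K})$ (the vector of outputs along all words starting at $q$). I would show that the $O_K$-submodule $M \subseteq W_{O_K}(O_{\bar K})$ spanned by $\{\xi_q\}_{q \in Q}$ is finitely generated and stable under multiplication by $\xi$, after which integrality of $\xi$ follows from the standard determinant trick. The stability uses that the Witt ring structure interacts with Adams operations in a way compatible with the transitions of $\mathcal{A}$, which is where care with the Borger-style relative Witt functor is required.

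For the direction $(\Rightarrow)$, assume $\xi$ is integral over $O_K$. The $O_K$-subalgebra $A = O_K[\xi]$ of $W_{O_K}(O_{\bar K})$ is then a finite $O_K$-algebra; since $W_{O_K}(-)$ is stable under the Adams operations, I would show that the (pro)finite closure of $A$ under all $\psi_\idp$ is still finite over $O_K$, call it $\tilde A$. The resulting tuple $(\tilde A, \{\psi_\idp\})$ is a finite $O_K$-algebra carrying commuting Frobenius-type endomorphisms, which under the classification of integral $O_K$-algebras developed via the semi-Galois machinery of \cite{Uramoto16,Uramoto17} is exactly the datum of a finite transition system over the alphabet $\spec(O_K)\setminus\{0\}$. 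Taking states $=$ points of $\spec(\tilde A)$, transitions $=$ the $\psi_\idp$-actions, and output $=$ evaluation of the $(1)$-ghost coordinate produces the required DFA generating $(\xi_\ida)$.

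The main obstacle I expect is the faithful translation between the \emph{Witt-theoretic} description of $\xi$ (an element of a huge non-$p$-typical ring with delicate non-componentwise operations) and the \emph{automata-theoretic} description (a finite combinatorial object reading words of primes), since unlike in characteristic $p$ the Adams operations on $W_{O_K}(O_{\bar K})$ act by genuine ring endomorphisms but the ghost components transform in a twisted way under $\psi_\idp$ depending on whether $\idp \mid \ida$. Reconciling this twist with the literal finite-state behaviour of the automaton—so that both directions of the equivalence deal with the same indexing by $I_K$ and the same combinatorics—will be the technical heart of the proof, and is the step where the semi-Galois classification from the previous paper is indispensable.
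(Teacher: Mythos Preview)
Your high-level outline parallels the classical proof, but it overcomplicates the easy direction and leaves a genuine gap in the hard one.

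For $(\Leftarrow)$, the determinant trick is both unnecessary and not obviously valid: the ring structure on $W_{O_K}(O_{\bar K})\subseteq O_{\bar K}^{I_K}$ is \emph{componentwise}, so there is no reason the product $\xi\cdot\xi_q$ (with $\ida$-component $\tau(s_0\!\cdot\!\ida)\,\tau(q\!\cdot\!\ida)$) should lie in the $O_K$-span of the $\xi_{q'}$. The paper's argument is one line: automaticity forces the coefficient set $C_\xi=\{\xi_\ida:\ida\in I_K\}$ to be contained in the finite image of the output map, hence in some $O_L$; the product over $c\in C_\xi$ of the minimal polynomials of $c$ over $O_K$ is then a monic $O_K$-polynomial annihilating $\xi$ componentwise.

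The substantive direction is $(\Rightarrow)$, and here your plan is circular. You correctly want the $\psi_\idp$-closure $\tilde A=O_K\langle\xi\rangle=O_K[I_K\xi]$, and you correctly note that one must show $\tilde A$ is finite over $O_K$. But you give no mechanism for this finiteness, and you cannot import it from the semi-Galois classification: that machinery applies to objects of $\C_K$, and knowing that $K\otimes\tilde A\in\C_K$ already \emph{presupposes} $\tilde A$ finite over $O_K$. In the paper this step is the technical heart and is proved \emph{before} and independently of any semi-Galois or Borger--de~Smit input. The method is an explicit congruence estimate on ghost components,
\[
\xi_{\idp^f\ida}\ \equiv\ \xi_\ida \pmod{\idP^{\,1+\lfloor v_\idp(\ida)/f\rfloor}}\qquad(\idP\mid\idp,\ f=f_{\idP\mid\idp}),
\]
which, combined with a presentation of $W_{O_K}(O_K)$, shows that any $\xi$ with finite $C_\xi$ has finite orbit $I_K\xi$ (first for $L=K$; then for general $L/K$ via a norm transfer $N^*:W_{O_K}(O_L)\to W_{O_L}(O_L)$ and a reduction to finitely many exceptional primes using Frobenius automorphisms). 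The DFA is then built with state set $I_K\xi$ itself, transitions $\eta\mapsto\psi_\idp\eta$, and output $\eta\mapsto\eta_{(1)}$; your proposed state set $\spec(\tilde A)$ would carry the $\psi_\idp$ contravariantly and does not directly support the output map you describe. Only \emph{after} orbit finiteness is established does the paper bring in Borger--de~Smit, and then only to upgrade ``automatic'' to ``periodic modulo some $\idf$''.

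Finally, your worry about a ``twist'' on ghost components is misplaced in this setup: in Borger's construction $W_{O_K}(A)\subseteq A^{I_K}$, each $\psi_\idp$ is the plain shift $(\xi_\ida)\mapsto(\xi_{\idp\ida})$, so the $I_K$-indexing and the automaton's word-reading match on the nose with no correction term.
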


As in the case of original Christol's theorem, the technical core of this proof is to estimate the size of the orbits of integral Witt vectors $\xi$ under the actions of (infinitely many) \emph{Frobenius liftings} $\psi_\idp$ that are canonically equipped to $W_{O_K}(O_{\bar{K}})$ as it forms a \emph{$\Lambda$-ring} (cf.\ \S \ref{s2}). For effective estimation, we will develop in \S \ref{s3} several basic bounds on coefficients of Witt vectors. In the same way as original one, our arithmetic analogue of Christol theorem relates the integrality of Witt vectors $\xi \in W_{O_K}(O_{\bar{K}})$ over $O_K$ and the automata-theoretic complexity of their coefficients; we say that a Witt vector $\xi=(\xi_\ida)$ is \emph{automatic} if its coefficients $\xi_\ida$ can be generated by some finite automaton. 

After this proof, we combine our Christol theorem with a strong classification result of certain $\Lambda$-rings due to Borger and de Smit \cite{Borger_Smit1,Borger_Smit2}, to conclude an explicit description of the integral closure of $O_K$ within $W_{O_K}(O_{\bar{K}})$ (\S \ref{s4}). To be specific, Borger and de Smit proved in \cite{Borger_Smit2} that the category $\C_K$ of those $\Lambda$-rings which are finite {\'e}tale over $K$ and have \emph{integral models} (cf.\ \S \ref{s2}) is dually equivalent to the category $\cl DR_K$ of \emph{finite $DR_K$-sets}, i.e.\ finite sets equipped with continuous actions of the profinite monoid $DR_K$ called the \emph{Deligne-Ribet monoid} \cite{Deligne_Ribet}; for this proof, they used class field theory in an essential way. The Deligne-Ribet monoid $DR_K$ is presented in a rather explicit way, that is, in terms of \emph{moduli} $\idf$ of $K$ (cf.\ \S \ref{s4}); and thanks to this presentation, together with our Christol theorem, we deduce the following characterization of integral Witt vectors $\xi \in W_{O_K}(O_{\bar{K}})$ described only in terms of $K$ (cf.\ the second claim):
\begin{thm}
 (I) If a Witt vector $\xi=(\xi_\ida) \in W_{O_K}(O_{\bar{K}})$ is integral over $O_K$, then the $\Lambda$-ring $K \otimes O_K \langle \xi \rangle$ is finite {\'e}tale over $K$ and has an integral model. In particular, (II) a Witt vector $\xi=(\xi_\ida) \in W_{O_K}(O_{\bar{K}})$ is integral over $O_K$ if and only if the coefficients $\xi_\ida$ are periodic with respect to some fixed modulus $\idf$ of $K$ in the sense of \cite{Borger_hand} (cf.\ \S \ref{s4}).
\end{thm}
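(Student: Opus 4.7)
The plan is to combine our arithmetic Christol theorem with the Borger--de Smit anti-equivalence $\C_K \simeq \cl DR_K$ recalled above. The former produces automata-theoretic finiteness from the integrality of $\xi$, and the latter transports this finiteness into the Deligne--Ribet monoid $DR_K$, where ``periodicity modulo a modulus $\idf$'' lives natively.

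For claim (I), I would start from our arithmetic Christol theorem: integrality of $\xi$ makes $(\xi_\ida)$ automatic, whence the orbit $\{ \psi_{\ida} \xi : \ida \in I_K \}$ is finite --- it is carried by the state set of any generating automaton, via the identification of states with Frobenius shifts of $\xi$ used in the proof. Because the $\psi_\idp$ generate the $\Lambda$-ring operations, this finite orbit spans $O_K \langle \xi \rangle$ as an $O_K$-module; combined with the integrality of each $\psi_\ida \xi$ (inherited from $\xi$, since $\psi_\ida$ is an $O_K$-algebra endomorphism of $W_{O_K}(O_{\bar{K}})$), this makes $O_K \langle \xi \rangle$ a finitely generated $O_K$-module. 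Tensoring with $K$ yields a finite $K$-algebra. Étaleness then follows because $K \otimes O_K \langle \xi \rangle$ embeds, via the ghost map on $K \otimes W_{O_K}(O_{\bar{K}})$, into a product of copies of $\bar{K}$, forcing reducedness and hence étaleness over $K$; the ring $O_K \langle \xi \rangle$ is itself an integral model, so $K \otimes O_K \langle \xi \rangle$ lies in $\C_K$.

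For claim (II), apply the Borger--de Smit correspondence to the object produced in (I): $K \otimes O_K \langle \xi \rangle$ corresponds to a finite continuous $DR_K$-set $X_\xi$. Continuity of the action on a discrete finite set forces the action to factor through a finite quotient $DR_K / \sim_\idf$ at some modulus $\idf$ of $K$, by the presentation of $DR_K$ as the inverse limit of such quotients recalled in \S\ref{s4}. Unwinding the correspondence at the coefficient level --- each $\psi_\ida$ acts on $X_\xi$ through the image of $\ida$ in $DR_K / \sim_\idf$, and this action determines $\xi_\ida$ --- yields the periodicity of $(\xi_\ida)$ with respect to $\idf$ in the sense of \cite{Borger_hand}. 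Conversely, given such periodicity, the assignment $\ida \mapsto \xi_\ida$ factors through $DR_K / \sim_\idf$ and produces a finite continuous $DR_K$-set; Borger--de Smit returns a $\Lambda$-ring in $\C_K$ with an integral model, which embeds in $W_{O_K}(O_{\bar{K}})$ with $\xi$ in its image, proving integrality.

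The main technical obstacle is the coefficient-level translation used in both directions of (II): showing that the Borger--de Smit correspondence transports the statement ``the $DR_K$-action on $X_\xi$ factors through the quotient at $\idf$'' into the explicit statement ``$\xi_\ida = \xi_\idb$ whenever $\ida$ and $\idb$ have the same image in $DR_K / \sim_\idf$''. This requires identifying the Frobenius liftings $\psi_\idp$ with the class-field-theoretic generators of $DR_K$ and tracking how evaluation of Witt coefficients interacts with this presentation; the étaleness secured in (I) is what makes this bookkeeping clean rather than merely formal, since it places $K \otimes O_K\langle \xi \rangle$ inside the strictly controlled category $\C_K$ on which the duality is defined.
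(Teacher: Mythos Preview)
Your overall strategy matches the paper's: use the arithmetic Christol theorem to get orbit finiteness, build a finite \'etale $\Lambda$-ring over $K$, then invoke the Borger--de~Smit equivalence to extract periodicity. The treatment of~(II) is essentially the same as the paper's (though the paper disposes of the converse in one line: periodicity $\Rightarrow$ automaticity $\Rightarrow$ integrality, without a second pass through the duality).

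There is, however, a genuine gap in your argument for~(I), and it is precisely where the paper does its real work. You assert that ``$O_K\langle\xi\rangle$ is itself an integral model''. But an integral model must be a sub \emph{$\Lambda$-ring}, and for that you need the Frobenius congruence
\[
\psi_\idp a - a^{\#k_\idp} \in \idp\, O_K\langle\xi\rangle
\]
for all $a\in O_K\langle\xi\rangle$. From the ambient $\Lambda$-structure you only know this element lies in $\idp\, W_{O_K}(O_{\bar K})\cap O_K\langle\xi\rangle$, and there is no reason for that intersection to equal $\idp\, O_K\langle\xi\rangle$: the subring $O_K\langle\xi\rangle$ is typically not saturated in $W_{O_K}(O_{\bar K})$. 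The paper avoids this problem by taking instead $A:=W^a_{O_K}(O_{\bar K})\cap\bigl(K\otimes O_K\langle\xi\rangle\bigr)$ and proving, via a localization argument at each $\idp$, that $W^a_{O_K}(O_{\bar K})$ is itself a $\Lambda$-ring (the key being that $(\psi_\idp\zeta-\zeta^{\#k_\idp})/\pi$ again has finitely many coefficient values, hence is automatic). So the ``main technical obstacle'' is not the bookkeeping in~(II) that you flag, which is routine once~(I) is in hand, but rather the verification that \emph{some} integral $\Lambda$-model exists at all.
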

\noindent
In the first claim (I), $O_K\langle \xi \rangle$ denotes the $O_K$-algebra generated over $O_K$ by the orbit of $\xi$ under the actions of (infinitely many) Frobenius liftings $\psi_\idp$. Our Christol theorem is necessary to prove that $O_K\langle \xi \rangle$ is in fact finite over $O_K$ when $\xi$ is integral over $O_K$. The second claim (II) is an immediate but non-trivial consequence of (I) and \cite{Borger_Smit2}, which describes an explicit classification, in terms of moduli $\idf$ of $K$, of those Witt vectors which are integral over $O_K$. 


Finally, in the last section (\S \ref{s5}), we review these results from more generic viewpoints of category theory and \emph{Eilenberg theory} \cite{Eilenberg} (cf.\ \S \ref{s5s2}) separately--- the latter is aimed to make smoother the connection of the current work with our previous one \cite{Uramoto16,Uramoto17}. On one hand, from the standpoint of number theory, (our analogue of) Christol's theorem can be seen as a result that classifies number-theoretic objects (i.e.\ Witt vectors) in terms of their computational complexity; and as we discuss in \S \ref{s5s1}, the equivalence $\C_K \simeq (\cl DR_K)^{op}$ of \cite{Borger_Smit2} can be seen as a categorical principle behind such Christol-type phenomena (i.e.\ phenomena that some number-theoretic objects can be generated by finite automata). In order to review this result from a more general standpoint, we make a few technical remarks on such a categorical equivalence. In this relation, we discuss a generic construction of such categorical equivalences, which is applicable e.g.\ to \emph{$\Lambda$-schemes} \cite{Borger} in particular. On the other hand, our arithmetic analogue of Christol theorem is also intended to develop a geometric intuition for a right extension of Eilenberg theory \cite{Eilenberg}, particularly via the concrete example $\C_K$ of semi-galois categories of \cite{Borger_Smit1,Borger_Smit2}\footnote{Some background on Eilenberg theory and its relationship to semi-galois categories will be briefly summarized in \S \ref{s5s2} for the neccessity of our discussion. For more comprehensive texts on Eilenberg theory and its historical aspect, the reader is referred to e.g.\ \cite{Pin} and \cite{Eilenberg}; also, for the axiomatization of Eilenberg theory based on semi-galois categories, see \S 1, \S 5 and \S 7 in \cite{Uramoto17}.}; we will discuss this matter carefully, taking some naive analogy (between differential and discrete goemetries) seriously, based on our results developed in this paper and the concept of \emph{arithmetic derivations} due to Buium \cite{Buium} (cf.\ \emph{Remark} \ref{derivation}, \S \ref{s2s2}). There we compare the relationship between linear and nonlinear differential equations with that between finite automata and more general computational models (e.g.\ Turing machines). In this relation we will discuss a common discrete-geometric mechanism that makes several computational models (e.g.\ cellular automata) Turing complete. See \S \ref{s5s2} for a more careful discussion.


We are grateful to Isamu Iwanari, who told us the papers \cite{Borger_Smit1,Borger}, and also to Go Yamashita, who continuously encouraged us. This work was supported by JSPS KAKENHI Grant number JP16K21115.

\section{Preliminaries}
\label{s2}
\noindent
For the sake of reader's convenience, we recall here some necessary concepts and constructions concerning \emph{Witt vectors} and \emph{$\Lambda$-rings} \cite{Borger1} over a Dedekind domain. In this paper we consider only a special case of general definitions of these concepts; for more complete theory, the reader is referred to the original work \cite{Borger1}. No novel result is presented here; our contribution starts from the next section (\S \ref{s3}). (Hence the reader who is already familiar with these concepts can go directly to \S \ref{s3}.)

\subsection{Witt vectors}
\label{s2s1}
\noindent
We begin with recalling the construction of the ring of (generalized) Witt vectors over Dedekind domain due to Borger \cite{Borger1}. Let $R$ be a Dedekind domain, whose residue fields at (non-zero) prime ideals are always assumed to be finite, and let $K$ be the field of fractions; typical examples of such Dedekind domains include rings $O_K$ of integers of number fields $K$. In this paper, mainly intending $K$ to be a number field and $R$ to be the ring $O_K$ of integers of $K$, we (abusively) denote for a Dedekind domain $R$ (and for its field $K$ of fractions) by $P_K$ the set of (non-zero) prime ideals of $R$, and by $I_K$ the monoid of (non-zero) ideals of $R$, so that $I_K$ is the commutative monoid freely generated by $P_K$. For $\idp \in P_K$, $k_\idp$ denotes the residue field $R / \idp$. Also, we denote by $\#k_\idp$ the cardinality of $k_\idp$. 

The rings of \emph{Witt vectors} are, in this paper, defined only for \emph{flat} $R$-algebras $A$. (In general, the definition needs one more step; cf.\ \cite{Borger1}). Let $A$ be a flat $R$-algebra and $A^{I_K}$ be the ($I_K$-times) product of $A$, whose $R$-algebra structure is defined component-wise. For each $\idp \in P_K$, denote by $\psi_\idp: A^{I_K} \rightarrow A^{I_K}$ the $R$-algebra endomorphism given by the shift $\psi_\idp ( (\xi_\ida)_\ida) := (\xi_{\idp\ida})_\ida$. Then, we define the sub-$R$-algebras $U_n(A) \subseteq A^{I_K}$ by induction on non-negative integers $n$ as follows:
\begin{eqnarray}
  U_0 (A) &:=& A^{I_K}; \\
  U_{n+1}(A) &:=& \bigl\{ \xi \in U_n (A) \mid \forall \idp \in P_K. \hspace{0.1cm} \psi_\idp \xi - \xi^{\#k_\idp} \in \idp U_n(A) \bigr\}. 
\end{eqnarray}
With these data:
\begin{defn}[the ring of Witt vectors]
The ring $W_R(A)$ of \emph{Witt vectors with coefficients in $A$} is defined as follows:
\begin{eqnarray}
  W_R(A) &:=& \bigcap_{n=0}^\infty U_n(A). 
\end{eqnarray}
The elements $\xi = (\xi_\ida)$ of $W_R(A)$ are called \emph{Witt vectors} (over $R$ with coefficients in $A$); and each $\xi_\ida$ of $\xi$ is called the \emph{coefficient} (or \emph{component}) of $\xi$ at $\ida \in I_K$. 
\end{defn}
\begin{rem}[some intuition on $W_R(A)$]
A natural intuition on this inductive construction of $W_R(A)$ may be better understood when it is phrased in terms of \emph{arithmetic analogue of derivations} by Buium \cite{Buium}, which will be briefly described after the definition of \emph{$\Lambda$-rings} (\S \ref{s2s2}). In some sense, the concept of arithmetic analogue of derivations allows us to regard $U_n(A)$ (resp.\ $W_R(A) = \bigcap_n U_n(A)$) as the ring of ``$C^n$-functions'' (resp.\ the ring of ``$C^\infty$-functions''). See \emph{Remark} \ref{derivation}, \S \ref{s2s2}.
\end{rem}
In what follows, if we need not specify the base ring $R$ for $W_R(A)$, we omit the subscript and write $W(A)$ instead of $W_R(A)$. Concerning rings of Witt vectors, the following proposition is fundamental and will be used throughout this paper:
\begin{prop}[Borger, Proposition 1.9 \cite{Borger1}]
 For each $\idp,\idq \in P_K$ and $\xi \in W(A)$, one has that (i) $\psi_\idp \xi \in W(A)$--- that is, $W(A)$ is closed under the shift $\psi_\idp: A^{I_K} \rightarrow A^{I_K}$; (ii) $\psi_\idp \psi_\idq = \psi_\idq \psi_\idp$; and:
\begin{eqnarray}
  \psi_\idp \xi &\equiv& \xi^{\#k_\idp} \hspace{0.1cm} \mod \idp W(A), 
\end{eqnarray}
that is, (iii) $\psi_\idp \xi - \xi^{\#k_\idp} \in \idp W(A)$. 
\end{prop}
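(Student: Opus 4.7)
The strategy is to dispatch the three claims in the order (ii), (i), (iii): (ii) is essentially a tautology, (i) follows from a clean induction on the level index $n$, and (iii) carries the genuine technical weight. First, part (ii) is immediate: evaluating both $\psi_\idp\psi_\idq\xi$ and $\psi_\idq\psi_\idp\xi$ at an arbitrary $\ida\in I_K$ gives $\xi_{\idp\idq\ida}=\xi_{\idq\idp\ida}$ by commutativity of the ideal monoid $I_K$, so $\psi_\idp\psi_\idq=\psi_\idq\psi_\idp$ holds on all of $A^{I_K}$ and in particular on $W(A)$.

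For (i), I plan to show by induction on $n$ that $\psi_\idp$ preserves each sub-$R$-algebra $U_n(A)$; intersecting over $n$ then yields $\psi_\idp(W(A))\subseteq W(A)$. The base case $n=0$ is immediate since $U_0(A)=A^{I_K}$. For the inductive step, let $\xi\in U_{n+1}(A)$ and $\idq\in P_K$. By definition, $\psi_\idq\xi-\xi^{\#k_\idq}\in\idq U_n(A)$. Applying the $R$-algebra endomorphism $\psi_\idp$, using (ii) to swap $\psi_\idp\psi_\idq=\psi_\idq\psi_\idp$, the multiplicativity of $\psi_\idp$, and the inductive hypothesis combined with $R$-linearity (which gives $\psi_\idp(\idq U_n(A))\subseteq\idq U_n(A)$), one obtains $\psi_\idq(\psi_\idp\xi)-(\psi_\idp\xi)^{\#k_\idq}\in\idq U_n(A)$, so $\psi_\idp\xi\in U_{n+1}(A)$.

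For (iii), claim (i) together with the sub-$R$-algebra property of $W(A)$ implies that $\delta:=\psi_\idp\xi-\xi^{\#k_\idp}$ belongs to $W(A)$, while the defining congruence at each level forces $\delta\in\idp U_n(A)$ for every $n\geq 0$. The substantive task---and the main obstacle of the proof---is to upgrade this to $\delta\in\idp W(A)$, that is, to exchange the descending intersection over $n$ with multiplication by the ideal $\idp$. This is where the flatness of $A$ over $R$ and the Dedekind property of $R$ enter essentially. My plan is to exploit invertibility of $\idp$ by choosing $c_1,\ldots,c_k\in\idp^{-1}$ and $a_1,\ldots,a_k\in\idp$ with $\sum_i c_i a_i=1$; since $\delta\in\idp A^{I_K}$ at the ground level, each product $c_i\delta$ already lies in $A^{I_K}$ and one has the identity $\delta=\sum_i a_i(c_i\delta)$ there. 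The delicate remaining step, which carries the real technical weight, is to verify that each $c_i\delta$ in fact lies in $W(A)$; I would do this by a further induction along the filtration $U_0(A)\supseteq U_1(A)\supseteq\cdots$, using $R$-flatness of $A$ to ensure that local uniformizers act as nonzerodivisors on each $U_n(A)$, and treating primes $\idq\neq\idp$ and $\idq=\idp$ separately---the former being handled by a Fermat-little-theorem argument in the residue field $R/\idq$ (where the $c_i$ become units), the latter being the genuinely subtle case that forces one to unpack the Frobenius-lift congruence one level at a time. Assembling these factorizations yields $\delta=\sum_i a_i\eta_i$ with $\eta_i\in W(A)$, completing the proof.
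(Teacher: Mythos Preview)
The paper does not give its own proof of this proposition: it is quoted verbatim from Borger \cite{Borger1} (Proposition~1.9) in the preliminaries section and used as a black box. So there is no ``paper's proof'' to compare against, and I evaluate your outline on its own merits.

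Your treatments of (ii) and (i) are correct and essentially the standard ones. For (iii), your setup is right---you correctly isolate $\delta:=\psi_\idp\xi-\xi^{\#k_\idp}\in W(A)$ and observe $\delta\in\idp U_n(A)$ for every $n$, and your idea of writing $1=\sum_i c_i a_i$ with $c_i\in\idp^{-1}$, $a_i\in\idp$ is exactly the right use of invertibility of $\idp$ in a Dedekind domain. But you then overcomplicate the endgame. The ``further induction along the filtration'' with a prime-by-prime case split is unnecessary, and the $\idq=\idp$ case you flag as ``genuinely subtle'' is only subtle because you are attacking it the hard way (checking the Frobenius congruence for $c_i\delta$ directly runs into powers $c_i^{\#k_\idp}\in\idp^{-\#k_\idp}$, which is unpleasant). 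Instead, observe the purely module-theoretic fact: for \emph{any} $R$-submodule $M\subseteq A^{I_K}$ and any $c\in\idp^{-1}$, one has $c\cdot(\idp M)\subseteq M$ inside the torsion-free module $K\otimes_R A^{I_K}$ (just because $c\cdot p\in\idp^{-1}\idp=R$ for each $p\in\idp$). Applying this with $M=U_n(A)$ for each $n$ gives $c_i\delta\in U_n(A)$ for all $n$, hence $c_i\delta\in W(A)$, and then $\delta=\sum_i a_i(c_i\delta)\in\idp W(A)$ immediately. This is the same flatness/invertibility manoeuvre that the paper itself uses later (e.g.\ in the proof of Proposition~\ref{presentation}), and it replaces your sketched induction entirely.
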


\begin{ex}[Borger, \S 1.10, \cite{Borger1}]
\label{explicit presentation}
 The construction of $W_R(A)$ is not quite explicit, but when $R=\integer$ and $A=\integer$, the structure of $W_R(A)$ was described in \cite{Borger1} more explicitly as follows: 
\begin{eqnarray} 
  W_\integer(\integer) &=& \bigl\{ (\xi_n) \in \integer^{\nat} \mid \xi_{pn} \equiv \xi_n \mod p^{1+v_p(n)} \hspace{0.1cm} (\forall n \in \nat, \forall p: \textrm{prime}) \bigr\},
\end{eqnarray}
where $v_p(n)$ denotes the maximum index $e$ such that $p^e$ divides $n$. For the necessity of our purpose, we will extend later this expression of the ring $W_R(R)$ of Witt vectors from the case $R=\integer$ to the case of the ring $R=O_K$ of integers of an arbitrary number field $K$ (\S \ref{s3}). There, we will also present a partial description of $W_{O_K}(A)$ for the case where $A$ is the ring $O_L$ of integers of a finite extension $L / K$.
\end{ex}

\subsection{$\Lambda$-rings}
\label{s2s2}
\noindent
The above proposition says that the ring $W_R(A)$ of Witt vectors is an example of \emph{$\Lambda$-rings} (over $R$), which we now quickly recall; in fact, the construction $A \mapsto W(A)$ of rings of Witt vectors is a free construction of $\Lambda$-rings, which are defined as follows in the flat case (see \S 1.17, \cite{Borger1} for more general case):
\begin{defn}[$\Lambda$-ring]
 A \emph{$\Lambda$-ring} (or \emph{$\Lambda_R$-ring}) is a flat $R$-algebra $A$ equipped with $R$-algebra endomorphisms $\psi_\idp: A \rightarrow A$ given for each $\idp \in P_K$ that satisfies the following properties:
\begin{eqnarray}
  \psi_\idp \psi_\idq &=& \psi_\idq \psi_\idp; \\
  \psi_\idp a &\equiv& a^{\#k_\idp} \hspace{0.1cm} \mod \idp A, 
\end{eqnarray}
that is, $\psi_\idp a - a^{\#k_\idp} \in \idp A$. 
\end{defn}

\begin{ex}
 The ring $W_R(A)$ of Witt vectors is a $\Lambda_R$-ring with the $\Lambda$-operators $\psi_\idp$. 
\end{ex}

\begin{ex}
 Let $R=\integer$ be the ring of integers, whence the field of fractions $K$ is the rational number field $\ratf$ and $P_K$ is identified with the set of prime numbers. Then, for each natural number $N \geq 1$,  the ring $A_N:=\integer[z]/(z^N - 1)$ forms a $\Lambda_\integer$-ring, with $\Lambda$-ring operators $\psi_p: A_N \rightarrow A_N$ given by $\psi_p z := z^p$ for each prime number $p$. 
\end{ex}

\begin{rem}[notation $\psi_\ida$]
 By the commutativity $\psi_\idp \psi_\idq = \psi_\idq \psi_\idp$ of $\psi_\idp$'s on $\Lambda$-ring $A$, we can define $\psi_\ida: A \rightarrow A$ for each $\ida \in I_K$ by $\psi_\ida := \psi_{\idp_1} \circ \cdots \circ \psi_{\idp_n}$ if $\ida$ decomposes as $\ida = \idp_1 \cdots \idp_n$. We use this notation throughout this paper. 
\end{rem}

\begin{rem}[the maximality of $W_R(A)$]
\label{maximality of W}
For each $R$-algebra $A$, the $\Lambda$-ring $W_R(A)$ of Witt vectors is characterized as the largest sub-$\Lambda$-ring of $A^{I_K}$. In fact, assume that $B \subseteq A^{I_K}$ is a sub-$\Lambda$-ring. Then we have the inclusions $B \subseteq U_n (A)$ for every $n \geq 0$, which can be shown by induction on $n$: The case $n=0$ is trivial because $U_0(A) = A^{I_K}$; if the inclusion $B \subseteq U_n(A)$ is proved, then for every $\xi \in B$ we have $\psi_\idp \xi - \xi^{\#k_\idp} \in \idp B \subseteq \idp U_n(A)$ by the assumption that $B$ is a $\Lambda$-ring. This means that, by definition of $U_{n+1}(A)$, we have the inclusion $B \subseteq U_{n+1}(A)$; hence $B \subseteq \bigcap_n U_n(A) = W(A)$. This remark will be used to determine the structure of $W_{O_K}(O_K)$ for number fields $K$ (\S \ref{s3}). 
\end{rem}

\begin{rem}[integral model]
 As in \cite{Borger_Smit1,Borger_Smit2}, we will consider $\Lambda_R$-rings $X$ defined over the field $K$ of fractions of $R$ (rather than over $R$), e.g.\ the rings $\ratf[z]/(z^N - 1)$ with $\psi_p(z) = z^p$. When this is the case, i.e.\ when $X$ is a $K$-algebra, the second axiom of $\Lambda$-rings that $\psi_\idp x - x^{\#k_\idp} \in \idp X$ holds for arbitrary $R$-algebra endomorphisms $\psi_\idp: X \rightarrow X$. In other words, such $\Lambda_R$-rings are just $K$-algebras $X$ equipped with commuting family $\{\psi_\idp\}$ of $R$-algebra endomorphisms $\psi_\idp: X \rightarrow X$; so the second axiom is vain in this case. 

In particular, we say that a $\Lambda_R$-ring $X$ finite over $K$ \emph{has an integral model} if there exists a sub $\Lambda_R$-ring $A \subseteq X$ such that (i) $A$ is finite over $R$; and (ii) $X \simeq K \otimes_R A$, whence $A$ is called an \emph{integral model} of $X$. For instance $X:=\ratf[z]/(z^N -1)$ is an example of such rings, where $A:=\integer[z]/(z^N - 1)$ is an integral model. 
\end{rem}

\begin{rem}[Frobenius lifting and arithmetic derivation]
\label{derivation}
The map $\psi_\idp: A \rightarrow A$ satisfying $\psi_\idp a - a^{\#k_\idp} \in \idp A$ for all $a \in A$ in the definition of $\Lambda$-rings is conventionally called the \emph{Frobenius lifting} (of the Frobenius map $A/\idp A \ni a \mapsto a^{\#k_\idp} \in A/\idp A$), and naturally corresponds to \emph{arithmetic analogue of derivations} due to Buium \cite{Buium}. To see this, consider the case $K=\ratf$ (and $R=\integer$) here. 

Let $A$ be a $\Lambda_\integer$-ring. Then by definition of $\Lambda$-rings, for each prime number $p$ and each $a \in A$, there exists $a' \in A$ such that $\psi_p a = a^p + p a'$ (because $\psi_p a - a^p \in p A$). In particular, if $p$ is invertible in $A$, then one has $a' = (\psi_p a - a^p)/p$ in $A$; this operation $a \mapsto a'=(\psi_p a - a^p)/p$ was studied by Buium \cite{Buium} as an arithmetic analogue of the usual derivation $f(z) \mapsto f'(z)= \lim_{h \to 0} (f(z+h)-f(z))/h$ of functions. In what follows let us denote as $\delta_p a := (\psi_p a - a^p)/p$; then the arithmetic derivation $\delta_p$ satisfies the following properties similar to (but slightly different from) usual derivations (i.e.\ linearlity and Leibnitz rule):
\begin{eqnarray*}
 \delta_p(a + b) &=& \delta_p a + \delta_p b + C_p(a,b); \\
 \delta_p(ab) &=& (\delta_p a) b^p + a^p (\delta_p b) + p (\delta_p a) (\delta_p b);
\end{eqnarray*}
where $C_p(x,y) := (x^p + y^p - (x+y)^p)/p$. Also, one has $\delta_p(1) = 0$, which is analogue to the fact that the derivation of a constant function is constantly $0$. Conversely, given an operation $\delta_p:A \rightarrow A$ satisfying these properties, then the map $A \ni a \mapsto a^p + p\delta_p a \in A$ becomes a Frobenius lifting. So the Frobenius lifting $\psi_p: A \rightarrow A$ (which is a ring endomorphism on $A$) corresponds to arithmetic derivation $\delta_p: A \rightarrow A$ (which is not even additive) in this way (i.e.\ $\psi_p a = a^p + p \delta_p a$). 

In view of this, let us return to the inductive construction of the ring $W_R(A) = \bigcap_n U_n(A)$ of Witt vectors (\S \ref{s2s1}); again consider the case $R=\integer$. Recall that the first $U_0(A)$ is defined as $U_0(A) := A^{\nat}$, which makes it possible to consider the ring endomorphisms $\psi_p: U_0(A) \rightarrow U_0(A)$ by the shift $(\xi_n) \mapsto (\xi_{pn})$. Then, for each $n$, the $(n+1)$-st $U_{n+1}(A)$ is defined so that $\xi \in U_n(A)$ belongs in $U_{n+1}(A)$ if and only if $\psi_p \xi - \xi^p \in p U_n(A)$; that is, its derivation $\xi' = (\psi_p \xi - \xi^p)/p$ also belongs in $U_n(A)$. Therefore, if we regard $U_0(A)$ as the ring of ``$C^0$-functions'', then $U_n(A)$'s can be regarded inductively as those of ``$C^n$-functions'' in the sense of arithmetic derivations. Consequently, as $W_\integer(A)$ is the intersection of $U_n(A)$'s, the ring $W_\integer(A)$ of Witt vectors is the ring of ``$C^\infty$-functions'' in this sense.

For a general $R$, this intuitive description is not quite precise because its prime ideals $\idp$ are not always principal. But viewing the ring $W_R(A)$ of Witt vectors in this informal way will be helpful to gain some intuition on the following argument and constructions on Witt vectors and $\Lambda$-rings. In particular, the Frobenius liftings $\psi_\idp$ should be regarded as a sort of differential (or difference) operators.
\end{rem}

\section{Christol's theorem and its arithmetic analogue}
\label{s3}
\noindent
From now we start our contribution. After a review of the original Christol theorem and related concepts (\S \ref{s3s1}), we formulate and prove an arithmetic analogue of Christol's theorem (\S \ref{s3s2}). The next section (\S \ref{s4}) will then relate this variant of Christol theorem and the work of Borger and de Smit \cite{Borger_Smit1,Borger_Smit2}, which will be reviewed from the axiomatic standpoint of \emph{semi-galois categories} \cite{Uramoto16,Uramoto17} (\S \ref{s5}). 

\subsection{Original theorem}
\label{s3s1}
\noindent
Let $\fld_q$ be the finite field of $q$ elements. As mentioned in \S \ref{s1}, Christol's theorem states as follows:
\begin{thm}[Christol's theorem]
 A formal power series $\xi = \sum \xi_n t^n \in \fld_q[[t]]$ is algebraic over $\fld_q[t]$ if and only if the coefficients $(\xi_n) \in \fld_q^{\nat_{\geq 0}}$ can be ``generated by some deterministic finite automaton.''
\end{thm}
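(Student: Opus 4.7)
The plan is to reformulate ``generated by a DFA'' as a finiteness statement on the \emph{$q$-kernel} of the coefficient sequence, and then translate algebraicity over $\fld_q[t]$ into that same finiteness condition. By a classical characterization going back to Eilenberg, a sequence $(\xi_n) \in \fld_q^{\nat_{\geq 0}}$ is produced by a DFA reading the base-$q$ expansion of $n$ if and only if its $q$-kernel
\[
 N_q(\xi) \;:=\; \bigl\{(\xi_{q^k n + r})_{n \geq 0} : k \geq 0,\; 0 \leq r < q^k\bigr\}
\]
is a finite subset of $\fld_q^{\nat_{\geq 0}}$. At the level of generating series, $N_q(\xi)$ is the orbit of $\xi(t)$ under iterated applications of the decimation operators $\Lambda_r : \fld_q[[t]] \to \fld_q[[t]]$ defined by $\Lambda_r(\sum a_n t^n) := \sum a_{qn+r} t^n$ for $r = 0, \ldots, q-1$. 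The key identity, used in both directions and obtained by splitting indices modulo $q$ together with the fact that $a^q = a$ on $\fld_q$, is
\[
 \xi(t) \;=\; \sum_{r=0}^{q-1} t^r \bigl((\Lambda_r \xi)(t)\bigr)^q.
\]

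For the direction \emph{automatic $\Rightarrow$ algebraic}, I would enumerate the finite orbit $\xi^{(1)} = \xi, \ldots, \xi^{(s)}$ under the $\Lambda_r$'s and read off, from the identity above, the polynomial system
\[
 \xi^{(i)} \;=\; \sum_{r=0}^{q-1} t^r \bigl(\xi^{(\sigma(i,r))}\bigr)^q \qquad (1 \leq i \leq s),
\]
where $\sigma : \{1,\ldots,s\} \times \{0,\ldots,q-1\} \to \{1,\ldots,s\}$ is the transition map of the generating DFA. Iterated resultants---or, more conceptually, the closure of the set of algebraic power series under ring operations and $q$-th roots inside $\fld_q[[t]]$---then force each $\xi^{(i)}$, and in particular $\xi$ itself, to be algebraic over $\fld_q(t)$.

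For the converse \emph{algebraic $\Rightarrow$ automatic}, which is the core of the theorem, I would start from a minimal equation $P(t, \xi) = 0$ with $P \in \fld_q[t, y]$ and aim to construct a finite-dimensional $\fld_q$-subspace $V \subset \fld_q[[t]]$ containing $\xi$ and stable under every $\Lambda_r$. Inside the finite extension $L := \fld_q(t)(\xi) \subset \fld_q((t))$ one has $[L : L^q] = q$, with $\{1, t, \ldots, t^{q-1}\}$ as a basis, so every element of $L$ admits a unique decomposition $\sum_{r=0}^{q-1} t^r \eta_r^q$ with $\eta_r \in L$; this is precisely the algebraic incarnation of the decimation operators, namely $\eta_r = \Lambda_r(\cdot)$ on the elements lying in $\fld_q[[t]]$. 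The task then reduces to carving out of $L$ a finite-dimensional $\fld_q$-lattice built from elements whose pole orders are uniformly bounded at the places of $L$ above $t = 0$ and $t = \infty$, in such a way that the maps $\sum t^r \eta_r^q \mapsto \eta_r$ preserve it.

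The main obstacle I anticipate is exactly this uniform pole control: iterating the decimations could \emph{a priori} worsen the valuations of the resulting elements at the relevant places of $L$ without bound, so one must verify, by a Riemann--Roch style bookkeeping on the function field $L$, that a single finite-dimensional $\fld_q$-lattice suffices. Once such a $V$ is constructed, the orbit of $\xi$ under the $\Lambda_r$'s lies in $V$ and is therefore finite, whence $N_q(\xi)$ is finite; the DFA with state set $N_q(\xi)$, transitions $(\eta, r) \mapsto \Lambda_r \eta$, and output $\eta \mapsto \eta_0$ then generates $(\xi_n)$, completing the proof.
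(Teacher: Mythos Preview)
The paper does not actually prove this theorem; it is quoted as background, with only a one-paragraph sketch of ``typical proofs'' and a reference to Bridy/Speyer. Your proposal is correct and matches that sketch in substance: the paper singles out the decimation operators $\rho_i\bigl(\sum\xi_n t^n\bigr)=\sum\xi_{pn+i}^{1/p}t^n$ and says the crux is proving that the orbit of $\xi$ under these operators is finite, pointing to the function-field/Riemann--Roch argument for the bound---exactly the ``pole control'' step you flag as the main obstacle.

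The only visible differences are cosmetic. First, the paper works in base $p$ (with $q=p^f$) and builds the $p$-th root into its operators $\rho_i$, whereas you work in base $q$ with the root-free operators $\Lambda_r$; the two formulations are interchangeable since $p$- and $p^f$-automaticity coincide and the Frobenius is bijective on $\fld_q$. Second, your write-up spells out the easy direction (automatic $\Rightarrow$ algebraic) via the system $\xi^{(i)}=\sum_r t^r(\xi^{(\sigma(i,r))})^q$, which the paper omits entirely. Neither difference changes the strategy; your plan is the standard Christol/Eilenberg argument the paper is alluding to.
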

\noindent
In order to make this claim precise, we first describe the formal definitions of \emph{deterministic finite automata (with output)} (DFAs (resp.\ DFAOs)) and how they generate formal power series. 

Some conventional terminologies in automata theory are in order here: \emph{Alphabets} mean arbitrary sets, whose elements are called \emph{letters}; in this paper we do \emph{not} assume alphabets to be finite in general. A finite sequence $a_1 a_2 \cdots a_n$ of letters $a_i$ in a fixed alphabet $\Delta$ is called a \emph{finite word over $\Delta$}; in particular, the \emph{empty word} is the word of length $0$ and denoted $\varepsilon$. The set of finite words over an alphabet $\Delta$ forms a monoid with respect to the concatenation $(u,v) \mapsto uv$ of finite words; this monoid is denoted $\Delta^*$, where the empty word $\varepsilon$ is the unit. 

The concept of \emph{deterministic finite automata (with output)}, for short \emph{DFAs (resp.\ DFAOs)}, is defined as follows, which will be used to formalize Christol's theorem as well as our variant of the theorem: 

\begin{defn}[DFA and DFAO]
\label{dfa}
 Let $\Delta$ be a fixed alphabet. A \emph{deterministic finite automaton (DFA) over $\Delta$} is defined as a tupple $\mathfrak{A}=(S,\delta,s_0)$ of the following data:
\begin{enumerate}
 \item a finite set $S$ of \emph{states};
 \item a \emph{transition function} $\delta: S \times \Delta \rightarrow S$;
 \item an \emph{initial state} $s_0 \in S$. 
\end{enumerate}
A \emph{deterministic finite automaton with output (DFAO)} is a DFA $\mathfrak{A}=(S,\delta,s_0)$ equipped with an \emph{output function} $\tau:S \rightarrow O$ with values in a fixed set $O$ of \emph{output values}; we write DFAOs in such a way as $\mathfrak{A}_\tau=(S,\delta,s_0,\tau)$. The alphabet $\Delta$ over which the DFAO is defined is sometimes called the \emph{input alphabet}, while the set $O$ of output values is called the \emph{output alphabet} of the DFAO. 
\end{defn}

\begin{rem}
 The transition function $\delta: S \times \Delta \rightarrow S$ of a DFA $\mathfrak{A}=(S,\delta,s_0)$ is naturally extended to (and identified with) a map $\delta^*:S \times \Delta^* \rightarrow S$, which is given by induction on length of finite words as follows:
\begin{eqnarray*}
  \delta^* (s,\varepsilon) &:=& s; \\
  \delta^* (s,ua) &:=& \delta(\delta^*(s,u), a),
\end{eqnarray*}
where $s \in S$, $u \in \Delta^*$ and $a \in \Delta$. In what follows, we identify $\delta^*$ with $\delta$. Also, we often write as $\delta(s,u) = s\cdot u$ or simply $s u$ for $s \in S$ and $u \in \Delta^*$. 
\end{rem}

\begin{rem}[DFAO as graph]
\label{dfa as graph}
 It is conventional and helpful to picture DFAs (or DFAOs) as a kind of directed graphs. First, in general, let $\mathfrak{A}=(S,\delta,s_0)$ be a DFA over an alphabet $\Delta$. If $\delta(s,a) = s'$ for $s, s' \in S$ and $a \in \Delta$, we write this fact as $s \xrightarrow{a} s'$. In this way $\mathfrak{A}$ defines a finite directed graph, i.e.\ whose vertex set is $S$ and two vertices $s, s' \in S$ are connected by an edge $s \xrightarrow{a} s'$ with a label $a \in \Delta$ if and only if $\delta(s,a) = s'$. Therefore, in terms of the extended transition function $\delta^*$ above, we have $\delta^*(s,u)=s'$ for $s,s' \in S$ and $u=a_1\cdots a_n \in \Delta^*$ if and only if there exists a path $s \xrightarrow{a_1} s_1 \xrightarrow{a_2} \cdots \xrightarrow{a_n} s'$ in the graph. 

Similarly, a DFAO $\mathfrak{A}_\tau = (S,\delta,s_0,\tau)$ defines a finite directed graph; in this case, not only edges but also vertices as well are labelled by elements of the output alphabet $O$; namely, each vertex $s \in S$ of the graph corresponding to the DFA $\mathfrak{A}=(S,\delta,s_0)$ is labelled by the output value $\tau(s) \in O$. 
\end{rem}

Now we return to Christol's theorem. Firstly, in general, a DFAO defines a map of the form $\Delta^* \rightarrow O$ in the following way: Let $\mathfrak{A}_\tau = (S,\delta,s_0,\tau)$ be a DFAO with input alphabet $\Delta$ and output alphabet $O$. Given a finite word $u \in \Delta^*$, we have an output value $f_{\mathfrak{A}_\tau}(u) \in O$ defined by:
\begin{eqnarray}
 f_{\mathfrak{A}_\tau}(u)   &:=& \tau (\delta(s_0, u)).
\end{eqnarray}
Graphically, a DFAO outputs a value $b \in O$ from an input $u=a_1\cdots a_n$ if starting from the initial state $s_0$, transiting the states $s_0 \xrightarrow{a_1} s_1 \xrightarrow{a_2} \cdots \xrightarrow{a_n} s_n$ in the corresponding graph along the input $u$, then it terminates at the state $s_n$ which is labeled by $b = \tau(s_n)$. In this way, a DFAO gives a machinery to transform finite words $u$ over input alphabet $\Delta$ into an output value $b=f_{\mathfrak{A}_\tau}(u) \in O$. 

The generation of coefficients $(\xi_n) \in \fld_q^{\nat}$ of formal power series $\xi=\sum \xi_n t^n \in \fld_q[[t]]$ by finite automata, mentioned in the claim of Christol's theorem, is based on this machinery of DFAOs; to define it precisely, recall \emph{base-$p$ expansions} of natural numbers: Let $q$ be the cardinality of $\fld_q$, that is, a power $q=p^f$ of its characteristic $p$. As is well-known in elementary number theory, every natural number $n \in \nat$ can be uniquely expressed in the following form:
\begin{eqnarray}
  n &=& a_0 + a_1 p + a_2 p^2 + \cdots + a_r p^r,
\end{eqnarray}
where $a_i \in \{0,1, \cdots, p-1\} \simeq \fld_p$ and $a_r\neq 0$. Using this base-$p$ expression, we denote by $[n]_p$ the finite word $a_0 a_1 \cdots a_r \in \fld_p^*$ over the alphabet $\Delta=\fld_p$. 

Now we can define the concept of \emph{automatic formal power series} as follows: 

\begin{defn}[automatic series]
 We say that a formal power series $\xi=\sum \xi_n t^n \in \fld_q[[t]]$ is \emph{($p$-) automatic} if there exists a DFAO $\mathfrak{A}_\tau = (S,\delta,s_0,\tau)$ with input alphabet $\Delta=\fld_p$ and output alphabet $O=\fld_q$ that \emph{generates} the coefficients $\xi_n \in \fld_q$ in the following sense:
\begin{eqnarray}
  \xi_n &=& f_{\mathfrak{A}_\tau} ([n]_p),
\end{eqnarray}
for every $n \in \nat$. 
\end{defn}

Therefore, automatic series are those $\xi \in \fld_q[[t]]$ whose coefficients $(\xi_n)$ are controled by some fixed DFAO $\mathfrak{A}_\tau$, while coefficients of general formal power seires are arbitrary. Although the automaticity of formal power series seemingly has nothing to do with the algebraicity over the polynomial ring $\fld_q[t]$, Christol's theorem asserts that these concepts are precisely equivalent, which we restate as follows:
\begin{thm}[Christol's theorem, restated]
 A formal power series $\xi = \sum \xi_n t^n \in \fld_q[[t]]$ is algebraic over $\fld_q[t]$ if and only if it is automatic. 
\end{thm}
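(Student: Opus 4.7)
The plan is to reformulate automaticity as finiteness of a certain orbit (the $p$-kernel) of $\xi$, and then to establish the equivalence with algebraicity by exploiting the degree-$p$ Frobenius extension $\fld_q((t))/\fld_q((t^p))$.

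First I would introduce, for each digit $r \in \{0, \ldots, p-1\}$, the Cartier-type operator $\Lambda_r : \fld_q[[t]] \to \fld_q[[t]]$ defined by
\[
\Lambda_r\Bigl(\sum_n a_n t^n\Bigr) := \sum_n a_{pn+r} t^n,
\]
together with the \emph{$p$-kernel}
\[
K_p(\xi) := \{\Lambda_{r_k} \cdots \Lambda_{r_1} \xi : k \geq 0, \ r_i \in \{0,\ldots, p-1\}\}.
\]
The digit recursion $[pn+r]_p = r \cdot [n]_p$, combined with a Myhill--Nerode style argument, identifies the states reachable from the initial state of a minimal DFAO generating $(\xi_n)$ with the distinct series appearing in $K_p(\xi)$; hence $\xi$ is $p$-automatic if and only if $K_p(\xi)$ is finite. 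Both directions of the theorem then reduce to the equivalence: $\xi$ is algebraic over $\fld_q[t]$ iff $K_p(\xi)$ is finite.

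For the easier direction \emph{automatic $\Rightarrow$ algebraic}, suppose $K_p(\xi)$ is finite and let $V$ denote its $\fld_q$-linear span in $\fld_q((t))$. The identity
\[
\eta = \sum_{r=0}^{p-1} t^r \bigl(\sigma(\Lambda_r \eta)\bigr)^p,
\]
where $\sigma$ is the inverse of the Frobenius on $\fld_q$ applied coefficientwise, shows that the $\fld_q(t)$-subspace $W$ of $\fld_q((t))$ spanned by $V$ is finite-dimensional and stable under the absolute Frobenius $f \mapsto f^p$. Linear dependence of $\xi, \xi^p, \xi^{p^2}, \ldots$ inside $W$ over $\fld_q(t)$ then produces a nontrivial polynomial relation for $\xi$; clearing denominators yields algebraicity over $\fld_q[t]$.

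For the reverse direction \emph{algebraic $\Rightarrow$ automatic}, I would work inside the finite extension $L = \fld_q(t)(\xi)$ of $\fld_q(t)$ inside $\fld_q((t))$, reducing to the separable case via a Frobenius twist if necessary. The decomposition $f = \sum_{r=0}^{p-1} t^r \Lambda_r(f)^p$ realizes the $\Lambda_r$ in terms of the basis $\{1, t, \ldots, t^{p-1}\}$ of $\fld_q((t))/\fld_q((t^p))$, and this structure descends to $L$; so the $\Lambda_r$ act on $L \cap \fld_q[[t]]$. I would then construct a nonzero $D \in \fld_q[t]$ and a finitely generated $\fld_q[t, D^{-1}]$-submodule $\Lambda \subseteq L$ that contains $\xi$ and is stable under all $\Lambda_r$, by taking $D$ to be a common denominator for a fixed $\fld_q(t)$-basis of $L$ and for the matrices representing the $\Lambda_r$ in that basis. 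Because each $\Lambda_r$ roughly divides $t$-adic orders by $p$, the coordinates of $\Lambda_w \xi$ in this basis remain in a bounded-degree subset of $\fld_q[t, D^{-1}]$, so $K_p(\xi)$ lies in a finite-dimensional $\fld_q$-subspace of $\Lambda$ and is therefore finite.

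The main obstacle is the quantitative control in the last step: one must simultaneously bound the pole orders at infinity and the polynomial degrees of the coordinates of each $\Lambda_w \xi$ in the chosen basis, rather than merely showing that they lie in some finitely generated module. This requires combining (i) separability of $\fld_q((t))/\fld_q((t^p))$, so that the $\Lambda_r$ act $\fld_q(t)$-linearly on a fixed basis of $L$; (ii) an explicit contracting estimate showing that each $\Lambda_r$ shrinks the degrees of these coordinates by a factor of $p$ up to an additive constant coming from $D$; and (iii) the consequent stabilization of degrees after finitely many iterations. This careful bookkeeping is the arithmetic heart of Christol's original argument and is what genuinely uses the ring $\fld_q[t]$ in the conclusion.
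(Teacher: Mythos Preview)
The paper does not actually prove this statement: it is the classical Christol theorem, stated here as background and attributed to \cite{Christol,CKFG}. The only proof-related content the paper offers is a one-paragraph sketch after the restated theorem, introducing the operators $\rho_i(\sum_n \xi_n t^n) := \sum_n \xi_{pn+i}^{1/p} t^n$ and noting that the hard direction amounts to showing the orbit $\{\rho_{i_1}\cdots\rho_{i_r}\xi\}$ is finite, with a reference to Bridy and Speyer for details.

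Your proposal is essentially this same standard argument, and it is correct in outline. The only visible difference is cosmetic: you use the decimation operators $\Lambda_r(\sum_n a_n t^n) = \sum_n a_{pn+r} t^n$, while the paper's sketch uses the Frobenius-twisted version $\rho_i$ with the $1/p$-power on coefficients. The two orbits are in bijection via coefficientwise Frobenius, so finiteness of one is equivalent to finiteness of the other; your identity $\eta = \sum_r t^r (\sigma\Lambda_r\eta)^p$ is exactly the bookkeeping that relates them. Your handling of the hard direction (descending the $\Lambda_r$ to $L=\fld_q(t)(\xi)$ via $[L:L^p]=p$, then bounding coordinates in a finitely generated $\fld_q[t,D^{-1}]$-module using the $1/p$-contraction of degrees) is the standard route and matches what the paper alludes to but does not carry out.
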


The important direction of this proof is to show the automaticity from the algebraicity of formal power series $\xi \in \fld_q[[t]]$; that is, one needs to construct a DFAO that generates a given $\xi \in \fld_q[[t]]$ when it is algebraic over $\fld_q[t]$. For this purpose, some typical proofs of Christol's theorem first consider the $\fld_p$-linear operators $\rho_i: \fld_q[[t]] \rightarrow \fld_q[[t]]$ for each $0 \leq i \leq p-1$ given by:
\begin{eqnarray}
  \rho_i \bigl(\sum_n \xi_n t^n \bigr) &:=& \sum_n \xi_{pn+i}^{1/p} t^n;
\end{eqnarray}
and then prove that the orbit of $\xi$ under these operators, i.e.\ $\bigl\{\rho_{i_1} \rho_{i_2} \cdots \rho_{i_r} (\xi) \in \fld_q[[t]] \mid 0 \leq i_1,\cdots,i_r \leq p-1 \bigr\}$ is in fact a finite set. See e.g.\ the recent proof due to Bridy \cite{Bridy} (following Speyer \cite{Speyer}), where the author also gave a sharp estimate of the size of DFAO that generates algebraic $\xi \in \fld_q[[t]]$ by relating the operators $\rho_i$ on $\fld_q[[t]]$ and \emph{Cartier operators} on differentials $\Omega_{K/\fld_q}$ on the algebraic curve defined by $\xi$ over $\fld_q$, and by using Riemann-Roch theorem for function fields over finite fields.
\subsection{Arithmetic analogue}
\label{s3s2}
\noindent
Our analogue of Christol's theorem is \emph{arithmetic} in the sense that the polynomial ring $\fld_q[t]$ is replaced with the ring $O_K$ of integers of a number field $K$. In this variant, the ring $\fld_q[[t]]$ of formal power series is replaced with the ring $W_{O_K}(O_{\bar{K}})$ of Witt vectors (cf.\ \S \ref{s2}) over the Dedekind domain $O_K$ with coefficients in algebraic integers $O_{\bar{K}}$. There is, however, a significant difference from the original Christol theorem, which can be illustrated most effectively when $K=\ratf$. 

In the original Christol theorem, to create a finite word $[n]_p$ as an input to a DFAO, natural numbers $n$ are decomposed by base-$p$ expansion, $n=a_0 + a_1 p + a_2 p^2 + \cdots + a_r p^r$; and then one inputs to DFAO the resulting finite word $[n]_p = a_0 a_1 a_2 \cdots a_r$ over $\fld_p=\{0,1,\cdots, p-1\}$, to output coefficients $(\xi_n) \in \fld_q^{\nat}$ of automatic formal power series $\xi = \sum \xi_n t^n \in \fld_q[[t]]$. To the contrary, our analogue is somewhat \emph{multiplicative} in the sense that we decompose natural numbers $n$ by \emph{prime factorization}, $n = p_1 p_2 \cdots p_r$; and input to our DFAOs this finite word $p_1 p_2 \cdots p_r$ over prime numbers, to output coefficients of \emph{automatic Witt vectors} $(\xi_n) \in W_\integer(O_{\bar{\ratf}})$ (see below). Hence, in the case $K=\ratf$, our DFAOs are defined over the input alphabet $P_\ratf$ and output alphabet $O_{\bar{\ratf}}$. More formally we define \emph{automatic Witt vectors} as follows in general.

Let $R$ be a Dedekind domain (with finite residue fields) and $K$ be the field of fractions of $R$. Recall that $P_K$ (abusively) denotes the set of (non-zero) prime ideals of $R$ and $I_K$ the set of (non-zero) ideals of $R$ respectively. Recall also that, for an $R$-algebra $A$, the ring $W_R(A)$ of Witt vectors over $R$ with coefficients in $A$ is defined as a sub-$R$-algebra of the product $A^{I_K}$, namely $W_R(A) \subseteq A^{I_K}$. Hence, in particular, Witt vectors can be represented as families $\xi=(\xi_\ida)$ of elements $\xi_\ida$ in $A$ given for each ideals $\ida \in I_K$. 

\begin{defn}[automatic Witt vector]
  We say that a Witt vector $\xi=(\xi_\ida) \in W_R(A)$ is \emph{automatic} if there exists a DFAO $\mathfrak{A}_\tau=(S,\delta,s_0,\tau)$ over input alphabet $\Delta=P_K$ and output alphabet $O=A$ such that, for each finite word $u=\idp_1 \idp_2 \cdots \idp_r$ over $P_K$, we have:
\begin{eqnarray}
  \xi_\ida &=& f_{\mathfrak{A}_\tau}(u),
\end{eqnarray}
for $\ida \in I_K$ that decomposes as $\ida = \idp_1 \idp_2 \cdots \idp_r$. 
\end{defn}

\begin{rem}
As shown in the following development, automatic Witt vectors in fact constitute a \emph{sub $\Lambda_R$-ring} of $W_R(A)$. In what follows, we generally denote by $W_R^a(A) \subseteq W_R(A)$ the sub $\Lambda$-ring of automatic Witt vectors. 
\end{rem}

The primary goal of this section is to prove \emph{the arithmetic analogue of Christol's theorem}, which, for technical simplicity, we first formulate in the following ``finitary'' form (cf.\ Corollary \ref{generic arithmetic analogue} below); combining with \cite{Borger_Smit2}, this theorem will be used to give another detailed description of integral Witt vectors in \S \ref{s4}. 

\begin{thm}[arithmetic analogue of Christol's theorem\footnote{Although this theorem is actually true even when $O_K$ is replaced with an arbitrary Dedekind domain (and perhaps Krull domains) with finite residue fields and $L/K$ with finite separable extensions as apparently seen from our proof, we focus on the case of number fields, which is necessary and sufficient for our purpose in \S \ref{s4}, where we relate our variant of Christol's theorem and the work of Borger and de Smit \cite{Borger_Smit1,Borger_Smit2}.}]
\label{arithmetic analogue}
 Let $L/K$ be a finite extension of number fields. A Witt vector $\xi \in W_{O_K}(O_L)$ is integral over $O_K$ if and only if it is automatic. 
\end{thm}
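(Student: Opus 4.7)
The plan is to pass through the intermediate condition that the coefficient set $\{\xi_\ida \mid \ida \in I_K\} \subseteq O_L$ is finite, splitting the theorem into the chain
\[
 \text{$\xi$ integral over $O_K$}\ \Longleftrightarrow\ \text{$\{\xi_\ida\}$ is finite}\ \Longleftrightarrow\ \text{$\xi$ automatic}.
\]
Two of these implications are elementary and rely only on the componentwise structure of $W_{O_K}(O_L)$ inside $O_L^{I_K}$; the remaining one, a finite-orbit statement, is the technical core and uses the higher-order Witt congruences to be developed in \S\ref{s3}.

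For ``integral $\Longleftrightarrow$ finite coefficients'': if $p(\xi)=0$ for a monic $p \in O_K[X]$, then evaluating componentwise in $W_{O_K}(O_L) \subseteq O_L^{I_K}$ gives $p(\xi_\ida)=0$ for every $\ida$, so every $\xi_\ida$ lies in the finite set of roots of $p$ in $O_L$. Conversely, if $F := \{\xi_\ida\}$ is finite, each $\alpha \in F \subseteq O_L$ is integral over $O_K$ with monic minimal polynomial $p_\alpha \in O_K[X]$, and the product $p(X) := \prod_{\alpha \in F} p_\alpha(X) \in O_K[X]$ is monic and annihilates every $\xi_\ida$ componentwise, hence $p(\xi)=0$ in $W_{O_K}(O_L)$. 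The direction ``automatic $\Longrightarrow$ finite coefficients'' is immediate: $\xi_\ida = \tau(\delta^*(s_0,u)) \in \tau(S)$ and $|S|<\infty$.

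For the remaining implication ``finite coefficients $\Longrightarrow$ automatic'', I would take the orbit $\Sigma := \{\psi_\ida \xi \mid \ida \in I_K\} \subseteq W_{O_K}(O_L)$ as the state space. Every $\eta \in \Sigma$ has all coefficients in $F$ because $(\psi_\ida \xi)_\idb = \xi_{\ida\idb}\in F$; hence $\Sigma \subseteq V_F := \{\eta \in W_{O_K}(O_L) \mid \eta_\idb \in F\ \forall \idb\}$. Granting the key lemma below that $V_F$ is finite, one constructs the DFAO
\[
\mathfrak{A}_\tau := (\Sigma,\ \delta,\ \xi,\ \tau),\quad \delta(\eta,\idp) := \psi_\idp\eta,\quad \tau(\eta) := \eta_{(1)},
\]
with input alphabet $P_K$ and output alphabet $O_L$. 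By the commutativity $\psi_\idp\psi_\idq = \psi_\idq\psi_\idp$, the state reached from $\xi$ on a word $u = \idp_1\cdots\idp_r$ depends only on the product $\ida := \idp_1\cdots\idp_r$, namely $\delta^*(\xi,u) = \psi_\ida\xi$, whose output is $(\psi_\ida\xi)_{(1)} = \xi_\ida$ as demanded by Definition \ref{dfa}.

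The main obstacle is the finiteness of $V_F$, which is the orbit estimate announced in the introduction. My plan is to exploit the congruences to be established in \S\ref{s3} generalizing Example \ref{explicit presentation}: for each $\idp \in P_K$, any $\eta \in W_{O_K}(O_L)$ satisfies $\eta_{\idp\ida} \equiv \eta_{\ida}^{\#k_\idp}$ modulo some power of $\idp$ that grows with the $\idp$-adic valuation $v_\idp(\ida)$ of $\ida$. Since $F$ is finite, the non-zero differences $\alpha-\beta$ in $F$ have bounded $\idp$-adic valuations and are divisible by $\idp$ only for $\idp$ in a finite ``bad'' set $S_F \subseteq P_K$. For $\idp \notin S_F$, elements of $F$ are pairwise distinct modulo $\idp$, so $\eta_{\idp\ida}$ (which lies in $F$) is forced to equal the unique element of $F$ congruent to $\eta_\ida^{\#k_\idp}$ modulo $\idp$ and is thereby determined by $\eta_\ida$. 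For $\idp\in S_F$, the same uniqueness holds once $v_\idp(\ida)$ exceeds an explicit bound $M_\idp$ depending on $F$ and $\idp$. Iterating these reductions, each $\eta \in V_F$ is determined by its restriction to the finite set
\[
T_F := \bigl\{\ida\in I_K \,\bigm|\, v_\idp(\ida)=0 \text{ for } \idp\notin S_F,\ v_\idp(\ida)\leq M_\idp \text{ for } \idp\in S_F\bigr\},
\]
and so $|V_F| \leq |F|^{|T_F|} < \infty$, completing the finite-orbit estimate and hence the theorem.
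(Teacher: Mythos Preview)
Your overall chain of equivalences and the DFAO construction from a finite orbit match the paper's strategy exactly, and the elementary implications (integral $\Leftrightarrow$ finite coefficient set, automatic $\Rightarrow$ finite coefficient set) are correct. The gap is in your orbit-finiteness argument at the bad primes.

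The congruence you invoke --- $\eta_{\idp\ida} \equiv \eta_\ida^{\#k_\idp}$ modulo a power of $\idp$ growing with $v_\idp(\ida)$ --- is false. Take $K=L=\ratf$ and the constant Witt vector $\eta_n = 2$: then $\eta_{3n} - \eta_n^{3} = -6$ has $3$-adic valuation $1$ regardless of $v_3(n)$. What actually holds to growing order (Lemma~\ref{bound on coefficients}) is
\[
\eta_{\idp^{f}\ida} \ \equiv\ \eta_\ida \pmod{\idP^{\,1+\lfloor v_\idp(\ida)/f\rfloor}},
\]
i.e.\ the comparison is with $\eta_\ida$ itself (no $\#k_\idp$-th power), the step size is the inertia degree $f=f_{\idP\mid\idp}$, and the modulus is a prime $\idP$ of $O_L$ rather than $\idp$. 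For $L=K$ this collapses to $\eta_{\idp\ida}\equiv\eta_\ida\bmod\idp^{1+v_\idp(\ida)}$, and then your reduction scheme works verbatim once one replaces $\eta_\ida^{\#k_\idp}$ by $\eta_\ida$; this is precisely the paper's Lemma~\ref{orbit finiteness}. For $L\neq K$, however, the step is $f>1$, so your one-$\idp$-at-a-time reduction at bad primes does not apply directly. The paper's route around this (Lemma~\ref{orbit finiteness 2}) is to pass to a Galois closure, use the Frobenius automorphism to get the exact equality $\xi_{\idp\ida}=\xi_\ida^{(\frac{L\mid K}{\idP})}$ for all but finitely many unramified $\idp$ (your good-prime argument is essentially this, since $\eta_\ida^{\#k_\idp}\equiv\eta_\ida^{(\frac{L\mid K}{\idP})}\bmod\idP$), and then control the finitely many remaining primes by transporting $\xi$ along the norm map $N_{L\mid K}:I_L\to I_K$ to a Witt vector $N^*\xi\in W_{O_L}(O_L)$ and invoking the already-established base case over $L$.
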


As in the case of original Christol's theorem, the technical core of this proof is to estimate the size of the orbit of (integral) Witt vector $\xi \in W_{O_K}(O_L)$ under the action of the operators $\psi_\idp$; actually, the fact that we have infinitely many operators $\psi_\idp$ makes the problem more delicate. For effective estimation, we develop several bounds on coefficients of Witt vectors. 

In the following, fix a number field $K$, which will be a base for other finite extensions $L/K$. For $\ida \in I_K$ and $\idp \in P_K$, we denote by $v_\idp(\ida)$ the maximum index $e$ such that $\idp^e$ divides $\ida$. Also, for a real number $x$, we denote by $\lfloor x \rfloor$ the maximum integer $n$ such that $n \leq x < n+1$. In what follows, we simply write as $W(A) := W_{O_K}(A)$ for $O_K$-algebras $A$ if we do not need to specify the base $K$; if the subscript is ommitted as $W(A)$, it should be understood as $W_{O_K}(A)$ for the fixed $K$. 

\begin{lem}
\label{bound on coefficients}
 For every $\xi \in W(O_L)$, we have the following congruence for each $\ida\in I_K$, $\idp \in P_K$, and $\idP \in P_L$ over $\idp$ with the inertia degree $f=f_{\idP\mid \idp}$:
\begin{eqnarray}
   \xi_{\idp^f \ida} &\equiv& \xi_\ida \hspace{0.1cm} \mod \idP^{1+\lfloor v_\idp(\ida)/ f \rfloor}. 
\end{eqnarray}
\end{lem}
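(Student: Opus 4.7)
The plan is to prove the claim by induction on $m := v_\idp(\ida)$, with the base case $m = 0$ using only the weaker condition $\xi \in U_1(O_L)$. Set $q := \#k_\idp$ (so $q^f = \#k_\idP$) and $e := e_{\idP \mid \idp} \geq 1$, so that $\idp O_L \subseteq \idP^e$; for brevity write $N(j) := 1 + \lfloor j/f \rfloor$, so that the target congruence reads $\xi_{\idp^f \ida} \equiv \xi_\ida \pmod{\idP^{N(m)}}$. The base case follows by iterating the componentwise congruence $\xi_{\idp \ida''} \equiv \xi_{\ida''}^q \pmod{\idp O_L}$, hence modulo $\idP$, a total of $f$ times, to obtain $\xi_{\idp^f \ida} \equiv \xi_\ida^{q^f} \pmod{\idP}$; Fermat's little theorem in the residue field $k_\idP$ of order $q^f$ then collapses this to $\xi_\ida \pmod{\idP}$.

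For the inductive step $m \geq 1$, write $\ida = \idp \ida'$ with $v_\idp(\ida') = m - 1$, and exploit that $W(O_L)$ is a $\Lambda$-ring to decompose
\[
\psi_\idp \xi - \xi^q = \sum_i \beta_i \eta_i, \qquad \beta_i \in \idp,\ \eta_i \in W(O_L),
\]
as a finite sum. Evaluating this identity at $\ida'' = \ida'$ and at $\ida'' = \idp^f \ida'$ and subtracting produces
\[
\xi_{\idp^f \ida} - \xi_\ida = \bigl(\xi_{\idp^f \ida'}^q - \xi_{\ida'}^q\bigr) + \sum_i \beta_i \bigl[(\eta_i)_{\idp^f \ida'} - (\eta_i)_{\ida'}\bigr].
\]
The inductive hypothesis applies at $\ida'$ simultaneously to $\xi$ and to each $\eta_i$ (all in $W(O_L)$, with $v_\idp(\ida') < m$), placing $u := \xi_{\idp^f \ida'} - \xi_{\ida'}$ and each $(\eta_i)_{\idp^f \ida'} - (\eta_i)_{\ida'}$ in $\idP^{N(m-1)}$. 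The second sum then lies in $\idp \cdot \idP^{N(m-1)} \subseteq \idP^{e + N(m-1)}$, while the binomial expansion of $(\xi_{\ida'} + u)^q - \xi_{\ida'}^q$, using $\binom{q}{i} \in p\integer \subseteq \idp$ for $1 \leq i \leq q - 1$, places the first difference in $\idP^{\min(e + N(m-1),\, qN(m-1))}$.

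It remains to verify that both $e + N(m-1)$ and $qN(m-1)$ are at least $N(m)$: the jump $N(m) - N(m-1) \in \{0,1\}$ together with $e \geq 1$ handles the first, while $q \geq 2$ and $N(m-1) \geq 1$ give $qN(m-1) \geq N(m-1) + 1 \geq N(m)$ for the second; this closes the induction. The main obstacle is conceptual rather than computational: the argument is self-referential, and it is essential that each auxiliary Witt vector $\eta_i$ produced by the $\Lambda$-ring decomposition actually lie in $W(O_L)$ (not merely in $O_L^{I_K}$), so that the same inductive hypothesis can be fed back into itself at $\ida'$. This device mirrors the structure of Borger's explicit description of $W_\integer(\integer)$ recalled in Example~\ref{explicit presentation}.
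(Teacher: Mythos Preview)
Your proof is correct and follows essentially the same strategy as the paper's: induction using the $\Lambda$-ring decomposition $\psi_\idp\xi - \xi^{q} \in \idp W(O_L)$, subtraction of two evaluations, and application of the inductive hypothesis to the auxiliary Witt vectors $\eta_i$. The only noteworthy difference is cosmetic: the paper inducts on $n=\lfloor v_\idp(\ida)/f\rfloor$, writes $\ida=\idp^{f}\ida'$, and works with $\psi_{\idp^f}\xi-\xi^{q^f}$ (handling the power term via the factorization $x^{q^f}-y^{q^f}=(x-y)\sum x^l y^{q^f-1-l}$), whereas you induct on $m=v_\idp(\ida)$ one step at a time with $\psi_\idp$ and a binomial expansion---your finer induction also tracks the ramification index $e$, though this extra precision is not needed for the stated bound.
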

\begin{proof}
 We prove this claim by induction on $n=\lfloor v_\idp(\ida) / f \rfloor$. For the base case, let $n=0$, namely $0 \leq v_\idp(\ida) < f$. Since $W(O_L)$ is a $\Lambda$-ring, we generally have $\psi_\idp \xi - \xi^{\#k_\idp} \in \idp W(O_L)$, from which we can deduce $\psi_{\idp^f} \xi - \xi^{\#k_\idp^f} \in \idp W(O_L)$. In particular:
\begin{equation}
  \xi_{\idp^f \ida} - \xi_\ida^{\#k_\idp^f} \in \idp O_L \subseteq \idP. 
\end{equation}
Moreover, since $\#k_\idp^f$ is the cardinality of the residue field $O_L / \idP$, we also have:
\begin{equation}
   \xi_{\ida}^{\#k_\idp^f} - \xi_\ida \in \idP, 
\end{equation}
which imply that $\xi_{\idp^f \ida} \equiv \xi_\ida \mod \idP$ as requested. Assume for induction that the claim is true up to $\lfloor v_\idp (\ida') / f \rfloor \leq n$; and let $\lfloor v_\idp(\ida)/f \rfloor = n+1$. When this is the case, we can write as $\ida = \idp^f \ida'$ for some $\ida' \in I_K$ such that $\lfloor v_\idp (\ida')/f \rfloor = n$. Since $\psi_{\idp^f}\xi - \xi^{\#k_\idp^f} \in \idp W(O_L)$ as seen above, we obtain the following two equalities by looking at the $\ida$-th and $\ida'$-th components of $\psi_{\idp^f}\xi - \xi^{\#k_\idp^f}$: (Note that $\ida=\idp^f \ida'$.)
\begin{eqnarray}
  \xi_{\idp^f \ida} - \xi_\ida^{\#k_\idp^f} &=& \sum_{i=1}^m r_i \zeta_\ida^{(i)}; \\
  \xi_{\ida} - \xi_{\ida'}^{\#k_\idp^f} &=& \sum_{i=1}^m r_i \zeta_{\ida'}^{(i)},
\end{eqnarray}
for some $r_i \in \idp$ and $\zeta^{(i)} \in W(O_L)$. Subtracting the second equation from the first one, we obtain:
\begin{eqnarray}
  \xi_{\idp^f \ida} - \xi_\ida &=& (\xi_\ida^{\#k_\idp^f} - \xi_{\ida'}^{\#k_\idp^f}) + \sum_i r_i (\zeta_{\idp^f \ida'}^{(i)} - \zeta_{\ida'}^{(i)}). 
\end{eqnarray}
Concerning $\zeta^{(i)}$'s in the second term of the right hand side, we have the following membership because of $\lfloor v_\idp(\ida')/f \rfloor =n$ and the induction hypothesis:
\begin{equation}
  \zeta_{\idp^f \ida'}^{(i)} - \zeta_{\ida'}^{(i)} \in \idP^{1+n}.
\end{equation}
Therefore, combining with $r_i \in \idp \subseteq \idP$:
\begin{equation}
  \sum_i r_i (\zeta_{\idp^f \ida'}^{(i)} - \zeta_{\ida'}^{(i)}) \in \idP^{1+1+n} = \idP^{1+\lfloor v_\idp(\ida) / f \rfloor}.
\end{equation}
On the other hand, concerning the first term:
\begin{eqnarray}
 \xi_{\idp^f\ida'}^{\#k_\idp^f} - \xi_{\ida'}^{\#k_\idp^f} &=& (\xi_{\idp^f \ida'} - \xi_{\ida'}) \cdot \sum_{l=0}^{\#k_\idp^f - 1} \xi_{\idp^f \ida'}^l \xi_{\ida'}^{\#k_\idp^f - 1 - l}. 
\end{eqnarray}
By induction hypothesis, $\xi_{\idp^f \ida'} - \xi_{\ida'} \in \idP^{1+n}$; and hence, putting it as $\alpha \in \idP^{1+n}$, one has $\xi_{\idp^f \ida'} = \xi_{\ida'} + \alpha$ and thus the following equality:
\begin{eqnarray}
 \sum_{l=0}^{\#k_\idp^f -1} \xi_{\idp^f \ida'}^l \xi_{\ida'}^{\#k_\idp^f - 1 - l} &=& \#k_\idp^f \cdot \xi_{\ida'}^{\#k_\idp^f -1} + \alpha \cdot (\textrm{terms of $\xi_{\ida'}$ and $\alpha$}).
\end{eqnarray}
But, since $\#k_\idp^f \in \idP$ and $\alpha \in \idP$, it follows that this term is also in $\idP$. Therefore, $\xi_{\idp^f\ida'}^{\#k_\idp^f} - \xi_{\ida'}^{\#k_\idp^f} \in \idP \cdot \idP^{1+n} = \idP^{1+\lfloor v_\idp(\ida)/f \rfloor}$, which concludes that:
\begin{equation}
  \xi_{\idp^f \ida} - \xi_\ida \in \idP^{1+\lfloor v_\idp(\ida) / f \rfloor},
\end{equation}
as requested. This completes the proof. 
\end{proof}

Before working in the general case of finite extensions $L/K$, we first study the structure of the $\Lambda$-ring $W_{O_K}(O_K)$, i.e.\ the base case $L=K$; indeed this base case plays a key role also in the general case. As the first step of this, the following proposition gives a presentation of $W_{O_K}(O_K)$, which is an extension of the presentation of $W_\integer(\integer)$ (Example \ref{explicit presentation}, \S \ref{s2s1}) from the case of $K=\ratf$ to arbitrary number fields $K$; in particular, the presentation of $W_{O_K}(O_K)$ (for arbitrary number fields $K$) will be used to compare our target $\Lambda$-ring $W_{O_K}(O_L)$ with $W_{O_L}(O_L)$. 

\begin{prop}
\label{presentation}
 The $\Lambda$-ring $W(O_K) = W_{O_K}(O_K)$ has the following presentation:
\begin{eqnarray}
   W_{O_K}(O_K) &=& \bigl\{ \xi \in O_K^{I_K} \mid \xi_{\idp \ida} \equiv \xi_\ida \mod \idp^{1+v_\idp(\ida)} \hspace{0.1cm} (\forall \idp. \forall \ida.) \bigr\}. 
\end{eqnarray}
\end{prop}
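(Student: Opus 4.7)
Let $B$ denote the right-hand side. The inclusion $W_{O_K}(O_K) \subseteq B$ would be immediate from Lemma \ref{bound on coefficients} specialized to $L = K$: in that case $\idP = \idp$ and $f_{\idP\mid\idp} = 1$, so the lemma reduces to exactly $\xi_{\idp\ida} \equiv \xi_\ida \mod \idp^{1+v_\idp(\ida)}$, the defining condition of $B$. For the reverse inclusion, my plan is to invoke the maximality of $W_{O_K}(O_K)$ as the largest sub-$\Lambda$-ring of $O_K^{I_K}$ (Remark \ref{maximality of W}), so that it suffices to verify that $B$ is itself a sub-$\Lambda$-ring. Sub-$O_K$-algebra closure is immediate componentwise, and stability of $B$ under each shift $\psi_\idq$ follows by noting that the congruence defining $B$ at $(\idp,\idq\ida)$ is at least as strong as the one needed at $(\idp,\ida)$ after shifting, because $v_\idp(\idq\ida) \geq v_\idp(\ida)$.

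The nontrivial $\Lambda$-ring axiom, namely $\eta := \psi_\idp \xi - \xi^{\#k_\idp} \in \idp B$ for every $\xi \in B$ and $\idp \in P_K$, I would establish in three steps. (i) $\eta \in B$: expand $\eta_{\idp'\ida} - \eta_\ida = (\xi_{\idp\idp'\ida} - \xi_{\idp\ida}) - (\xi_{\idp'\ida}^{\#k_\idp} - \xi_\ida^{\#k_\idp})$ and bound each term via the congruences of $\xi$. For $\idp' \neq \idp$, both pieces lie directly in $\idp'^{1+v_{\idp'}(\ida)}$; for $\idp' = \idp$, a binomial expansion in $\alpha := \xi_{\idp\ida} - \xi_\ida \in \idp^{1+v_\idp(\ida)}$ together with $\#k_\idp \in \idp$ yields the stronger congruence $\eta_{\idp\ida} \equiv \eta_\ida \mod \idp^{2+v_\idp(\ida)}$, which will be essential in Step (iii). (ii) $\eta_\ida \in \idp$ for every $\ida$: combine $\xi_{\idp\ida} \equiv \xi_\ida \mod \idp$ coming from $B$ with Fermat's little theorem $\xi_\ida^{\#k_\idp} \equiv \xi_\ida \mod \idp$ in the residue field $O_K/\idp$. (iii) $r\eta \in B$ for every $r$ in the inverse fractional ideal $\idp^{-1}$: Step (ii) makes $r\eta \in O_K^{I_K}$, and for the congruence $r\eta_{\idp'\ida} \equiv r\eta_\ida \mod \idp'^{1+v_{\idp'}(\ida)}$ the case $\idp' = \idp$ uses the strengthened Step (i) congruence to absorb the $\idp^{-1}$ factor, while the case $\idp' \neq \idp$ follows by noting that $r(\eta_{\idp'\ida} - \eta_\ida) \in \idp^{-1}\idp'^{1+v_{\idp'}(\ida)} \cap O_K = \idp'^{1+v_{\idp'}(\ida)}$, the last equality being a valuation check at each prime.

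Given Step (iii), $\idp^{-1}\eta \subseteq B$; and since $\idp \cdot \idp^{-1} = O_K$ in the Dedekind domain $O_K$, writing $1 = \sum_i r_i s_i$ with $r_i \in \idp$ and $s_i \in \idp^{-1}$ yields $\eta = \sum_i r_i (s_i \eta) \in \idp B$, completing the verification that $B$ is a sub-$\Lambda$-ring. The main obstacle is precisely this final passage from ``$\eta$ has all components in $\idp$'' to the stronger statement ``$\eta \in \idp B$'': the invertibility of the non-zero prime ideal $\idp$ as a fractional ideal of the Dedekind domain $O_K$ is exactly what replaces the straightforward division-by-$p$ argument available for the case $K = \ratf$ of Example \ref{explicit presentation}.
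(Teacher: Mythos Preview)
Your proof is correct and follows essentially the same strategy as the paper: both establish the reverse inclusion by showing $B$ is a sub-$\Lambda$-ring (invoking Remark~\ref{maximality of W}), and both hinge on the strengthened congruence $\eta_{\idp\ida} \equiv \eta_\ida \bmod \idp^{2+v_\idp(\ida)}$ for $\eta = \psi_\idp\xi - \xi^{\#k_\idp}$. The only technical difference is in the passage from ``$\eta_\ida \in \idp$ for all $\ida$ with the strengthened congruence'' to ``$\eta \in \idp B$'': the paper localizes at each prime and works with a uniformizer $\pi$ in $O_{K,\idp}$, whereas you use the invertibility of $\idp$ as a fractional ideal to write $1 = \sum r_i s_i$ with $r_i \in \idp$, $s_i \in \idp^{-1}$. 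Your route is arguably slightly cleaner (it avoids juggling the unit $u$ and tracking membership across different localizations), but the two arguments are interchangeable and carry the same content.
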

\begin{proof}
 Let us denote by $V(O_K)$ the right hand side of this equation. The above lemma, applied to the case $L=K$, proves the inclusion $W(O_K) \subseteq V(O_K)$; here we prove the inverse inclusion $V(O_K) \subseteq W(O_K)$. To this end, it suffices to show that $V(O_K)$ itself is a $\Lambda$-ring, namely $\psi_\idp\xi - \xi^{\#k_\idp} \in \idp V(O_K)$ for every $\idp \in P_K$ and $\xi \in V(O_K)$ (cf.\ Remark \ref{maximality of W}, \S \ref{s2}). To see this, fix $\xi \in V(O_K)$ and $\idp \in P_K$ throught this proof. 

We first see that if we put $\eta:= \psi_\idp \xi - \xi^{\#k_\idp}$, then we have $\eta_{\idp \ida} - \eta_\ida \in \idp^{2+v_\idp(\ida)}$ for every $\ida \in I_K$. In fact:
\begin{eqnarray*}
  \eta_{\idp \ida} - \eta_\ida &=& (\xi_{\idp^2 \ida} - \xi_{\idp \ida}^{\#k_\idp}) - (\xi_{\idp \ida} - \xi_\ida^{\#k_\idp}) \\
  &=& (\xi_{\idp^2 \ida} - \xi_{\idp \ida}) - (\xi_{\idp \ida}^{\#k_\idp} - \xi_\ida^{\#k_\idp}).
\end{eqnarray*}
The first term $\xi_{\idp^2\ida} - \xi_{\idp \ida}$ of the last equation belongs in $\idp^{2+v_\idp(\ida)}$ by definition of $V(O_K)$ and $\xi \in V(O_K)$. Concerning the second term:
\begin{eqnarray}
 \xi_{\idp\ida}^{\#k_\idp} - \xi_{\ida}^{\#k_\idp} &=& (\xi_{\idp \ida} - \xi_{\ida}) \cdot \sum_{l=0}^{\#k_\idp - 1} \xi_{\idp \ida}^l \xi_{\ida}^{\#k_\idp - 1 - l}. 
\end{eqnarray}
By the same argument as the above lemma, it follows that the second term $\xi_{\idp\ida}^{\#k_\idp} - \xi_\ida^{\#k_\idp}$ is also in $\idp^{2+v_\idp(\ida)}$. By these facts, we indeed have $\eta_{\idp\ida} - \eta_\ida \in \idp^{2+v_\idp(\ida)}$. 

Now we show $\psi_\idp \xi - \xi^{\#k_\idp} \in \idp V(O_K)$. To this end, notice that $\idp V(O_K) = \bigcap_\idq \idp (O_{K,\idq} \otimes V(O_K))$, where $\idq$ ranges over all prime ideals of $O_K$ and $O_{K,\idq}$ denotes the localization of $O_K$ at $\idq$; in other words, it suffices to show that $\psi_\idp \xi - \xi^{\#k_\idp} \in \idp O_{K,\idq} \otimes V(O_K)$ for every $\idq \in P_K$. In the case of $\idq \neq \idp$, there is nothing to prove because $\idp O_{K,\idq} \otimes V(O_K) = O_{K,\idq} \otimes V(O_K)$. We see that $\psi_\idp \xi - \xi^{\#k_\idp} \in \idp O_{K,\idp} \otimes V(O_K)$. Let $\pi \in \idp$ be a uniformizer, whence $\idp O_{K,\idp} \otimes V(O_K) = \pi O_{K,\idp} \otimes V(O_K)$. Note that $\psi_\idp \xi - \xi^{\#k_\idp} \in \idp O_K^{I_K} \subseteq \pi O_{K,\idp}^{I_K}$ because $\xi_{\idp\ida} - \xi_\ida \in \idp$ (by $\xi \in V(O_K)$) and $\xi_\ida - \xi_\ida^{\#k_\idp} \in \idp$ for every $\ida \in I_K$. Hence, there exist a unit $u \in O_{K,\idp}^\times$ (more specifically, we take $u \in O_{K,\idp}^\times$ to be of the form $u=1/b$ with $b \in O_K\setminus \idp$) and $\zeta \in O_K^{I_K}$ such that:
\begin{eqnarray}
  \psi_\idp \xi - \xi^{\#k_\idp} &=& \pi u \cdot \zeta.
\end{eqnarray}
Now looking at the $\ida$-th component for each $\ida \in I_K$:
\begin{eqnarray}
  \xi_{\idp\ida} - \xi_\ida^{\#k_\idp} &=& \pi u \cdot \zeta_\ida.
\end{eqnarray}
By subtracting this equation for the $\ida$-th component from that for the $\idp\ida$-th component, we obtain:
\begin{eqnarray}
  \pi u \cdot (\zeta_{\idp\ida} - \zeta_\ida) &=& \eta_{\idp \ida} - \eta_\ida \in \idp^{2+v_\idp(\ida)}. 
\end{eqnarray}
Since we took $u$ from $O_{K,\idp}^\times = O_{K,\idp} \backslash \pi O_{K,\idp}$, this implies that $\zeta_{\idp \ida} - \zeta_\ida = (\pi u)^{-1}(\eta_{\idp\ida} - \eta_\ida) \in \idp^{1+v_\idp(\ida)}$. Also, applying the same argument for the $\idq\ida$-th component with $\idq \neq \idp$, we see that $\zeta_{\idq \ida} - \zeta_{\ida} = (\pi u)^{-1}(\eta_{\idq\ida} - \eta_\ida) \in \idq^{1+v_\idq(\ida)}$ as well, where the last membership to $\idq^{1+v_\idq(\ida)}$ follows from the fact that $\eta_{\idq\ida} - \eta_\ida \in \idq^{1+v_\idq(\ida)}$ by $\eta=\psi_\idp \xi - \xi^{\#k_\idp} \in V(O_K)$ and $(\pi u)^{-1} = b/\pi \in O_{K,\idq}$ (by $\idq\neq\idp$). In other words $\zeta \in V(O_K)$ by definition, hence $\psi_\idp \xi - \xi^{\#k_\idp} = \pi u \cdot \zeta \in \pi O_{K,\idp} \otimes V(O_K)$ as requested. This completes the proof. 
\end{proof}

\begin{rem}
 If a finite extension $L/K$ is non-trivial, the inclusion $W_{O_K}(O_L) \subseteq \{\xi \in O_L^{I_K} \mid \xi_{\idp^f \ida} \equiv \xi_\ida \mod \idP^{1+\lfloor v_\idp(\ida)/f \rfloor} \}$ (Lemma \ref{bound on coefficients}) is proper. In fact, if this inclusion is the equality, $O_L$ embeds in $W_{O_K}(O_L)$ by the diagonal map $a \mapsto (a)_\ida$. But when this is the case, it would follow (from Proposition \ref{integrality}, \S \ref{s4}) that the field $L$ is a finite {\'e}tale $\Lambda$-ring over $K$ with integral model on which $\psi_\idp$'s are all identities; this can happen only when $L=K$. 
\end{rem}

The following lemma is the key result for our proof of Theorem \ref{arithmetic analogue} in the base case $L=K$. 

\begin{lem}[orbit finiteness, the base case]
\label{orbit finiteness}
 If $\xi \in W(O_K)$ is a Witt vector whose set $\{\xi_\ida \in O_K \mid \ida \in I_K\}$ of coefficient values is a finite set, then the orbit $I_K \xi:= \{\psi_\ida \xi \in W(O_K) \mid \ida \in I_K\}$ under the action of $\psi_\idp$'s is also a finite set. 
\end{lem}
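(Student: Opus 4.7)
The plan is to exploit Proposition \ref{presentation}, together with the finiteness of the coefficient set $F := \{\xi_\ida : \ida \in I_K\}$, to confine the orbit to a finite product of cyclic orbits coming from only finitely many primes.

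First, I would identify the finitely many primes that can act non-trivially on $\xi$. Let $S \subseteq P_K$ be the set of primes $\idp$ dividing some nonzero difference $c-c'$ with $c, c' \in F$; since $F - F$ is finite, $S$ is finite. For $\idp \notin S$, the congruence $\xi_{\idp\ida} \equiv \xi_\ida \pmod{\idp^{1+v_\idp(\ida)}}$ supplied by Proposition \ref{presentation} forces $\xi_{\idp\ida} - \xi_\ida \in \idp$; since both coefficients lie in $F$ and no nonzero difference in $F$ is divisible by $\idp$ (by the very definition of $S$), this forces equality, so $\psi_\idp \xi = \xi$ whenever $\idp \notin S$.

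Next, for each $\idp \in S$, I would bound the $\psi_\idp$-iterates by exploiting the $\idp$-adic precision in the same congruence. Set $M_\idp := \max\{v_\idp(c-c') : c \neq c' \in F\}$, which is finite since $F$ is finite. For any $\idb \in I_K$, the difference $\xi_{\idp^{e+1}\idb} - \xi_{\idp^e\idb}$ lies in $\idp^{1+e+v_\idp(\idb)}$, whose $\idp$-valuation strictly exceeds $M_\idp$ as soon as $e \geq M_\idp$; hence, being a difference of two elements of $F$, it must vanish. Thus the whole sequence $\psi_{\idp^e\idb}\xi$ stabilizes in $e$ uniformly in $\idb$.

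Finally, since the $\psi_\idp$'s commute and act as the identity for $\idp \notin S$, every $\psi_\ida\xi$ with $\ida = \prod_{\idp\in S}\idp^{e_\idp} \cdot \idb$ (where $\idb$ is supported outside $S$) rewrites as $\prod_{\idp\in S}\psi_\idp^{\min(e_\idp, M_\idp)}\xi$, giving at most $\prod_{\idp\in S}(M_\idp+1)$ elements in the orbit. The main subtlety I anticipate is arranging the stabilization of $\psi_\idp$-iterates to be uniform in the auxiliary shift $\idb$, rather than only at the trivial ideal; this is what converts $\idp$-wise stability into joint stability across the finitely many primes of $S$, but it is essentially automatic from Proposition \ref{presentation}, because the $\idp$-adic precision in the congruence depends only on $e$ and not on the remaining index.
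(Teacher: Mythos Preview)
Your argument is correct and follows essentially the same route as the paper: both proofs extract from the finite difference set $F-F$ the divisor $\idd = \prod_{\idp} \idp^{M_\idp}$ (in your notation, $S$ is the support of $\idd$ and $M_\idp = v_\idp(\idd)$), then use Proposition \ref{presentation} to force $\xi_{\idp\ida}=\xi_\ida$ once $v_\idp(\ida)\geq M_\idp$. The only cosmetic difference is that the paper phrases the conclusion as ``each $\eta\in I_K\xi$ is determined by its components at the divisors of $\idd$'' (yielding the cruder bound $|C_\xi|^{\sigma(\idd)}$), whereas you phrase it as ``the orbit map factors through $\prod_{\idp\in S}\{0,\dots,M_\idp\}$'' (yielding the sharper bound $\sigma(\idd)$); one small notational slip is that ``$\psi_{\idp^e\idb}\xi$'' in your second paragraph should read ``$\xi_{\idp^e\idb}$'' (or equivalently ``$(\psi_{\idp^e}\xi)_\idb$'').
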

\begin{proof}
Fix $\xi \in W(O_K)$ that satisfies the assumption. Put $C_\xi :=\{\xi_\ida \mid \ida \in I_K\}$ and $T_\xi := \{ \xi_\ida - \xi_\idb \mid \ida,\idb \in I_K, \hspace{0.1cm} \xi_\ida \neq \xi_\idb\}$. Then we define $\idd_\xi=\idd \in I_K$ so that $v_\idp(\idd) = \max[\{0\}\cup \{ e \in \nat \mid \exists \zeta \in T_\xi.\hspace{0.1cm} \zeta \in \idp^e \}]$. Since $T_\xi$ is finite, this indeed defines a divisor $\idd \in I_K$. Concerning this $\idd$, notice that if $v_\idp(\idd) < v_\idp(\ida)$ for $\idp\in P_K$, then we have $\ida=\idp \ida'$ for some $\ida'$ (by $v_\idp(\ida) > v_\idp(\ido) \geq 0$) and the equality $\xi_{\ida} = \xi_{\ida'}$. In fact, by $\xi_{\idp\ida'} \equiv \xi_{\ida'} \mod \idp^{1+v_\idp(\ida')}$ (Proposition \ref{presentation}), we have $\xi_\ida - \xi_{\ida'} \in \idp^{1+v_\idp(\ida')} = \idp^{v_\idp(\ida)}$. But by definition of $\idd$ and $v_\idp(\idd) < v_\idp(\ida)$, $\xi_\ida - \xi_{\ida'}$ never belongs in $T_\xi$ (i.e.\ is necessarily zero), hence $\xi_\ida = \xi_{\ida'}$. Also, for every $\eta \in I_K \xi$, its components are in $C_\xi = \{\xi_\ida \mid \ida \in I_K\}$. These facts imply that $\eta \in I_K \xi$ is determined by the values of the components $\eta_\ida$ at those $\ida \in I_K$ such that $v_\idp(\ida) \leq v_\idp(\idd)$ for all $\idp \in P_K$, namely $\ida \mid \idd$.  Therefore, the size of $I_K \xi$ is not greater than that of $C_\xi^{\sigma(\idd)}$, where $\sigma(\idd) = \prod_\idp (1+v_\idp(\idd))$ denotes the number of those $\ida \in I_K$ dividing $\idd$, which is finite. This completes the proof. 
\end{proof}

With these preparations, we can now prove almost immediately at least the base case of our arithmetic analogue of Christol's theorem. We will use this case later, and state it as a lemma for more general case. For practical use, we restate this case of the theorem in the following refined form:

\begin{lem}[arithmetic analogue of Christol's theorem, the base case]
\label{base case of arithmetic analogue}
The following four conditions on $\xi \in W_{O_K}(O_K)$ are equivalent:
\begin{enumerate}
 \item $\xi$ is integral over $O_K$;
 \item the coefficients $C_\xi=\{ \xi_\ida \in O_K \mid \ida \in I_K\}$ form a finite set;
 \item the orbit $I_K\xi=\{\psi_\ida \xi \in W(O_K) \mid \ida \in I_K\}$ is a finite set;
 \item $\xi$ is automatic. 
\end{enumerate}
\end{lem}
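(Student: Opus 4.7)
My plan is to prove the lemma via a short cycle of implications, leveraging Proposition \ref{presentation} and Lemma \ref{orbit finiteness} (which together already do the heavy lifting) and adding only bookkeeping. I will establish $(2) \Leftrightarrow (1)$, $(2) \Leftrightarrow (3)$, and $(3) \Leftrightarrow (4)$; no single direction should require more than a few lines.

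For $(1) \Rightarrow (2)$: if $\xi$ is integral over $O_K$, it satisfies a monic polynomial $P(X) \in O_K[X]$ of some degree $d$. Reading $P(\xi)=0$ componentwise in $O_K^{I_K}$ gives $P(\xi_\ida)=0$ for every $\ida \in I_K$, so each $\xi_\ida$ is among the $\le d$ roots of $P$ in $O_K$, forcing $C_\xi$ to be finite. Conversely for $(2) \Rightarrow (1)$: if $C_\xi = \{c_1,\dots,c_m\}$, set $P(X):=\prod_i (X-c_i) \in O_K[X]$. Then $P(\xi_\ida)=0$ for all $\ida$, so $P(\xi)=0$ in $O_K^{I_K}$ and hence in $W(O_K)$, showing $\xi$ is integral. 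The implication $(2) \Rightarrow (3)$ is exactly Lemma \ref{orbit finiteness}. The reverse $(3) \Rightarrow (2)$ is immediate: the unit ideal $\ido \in I_K$ acts trivially ($\psi_\ido=\id$), and the definition of the shift gives $\xi_\ida = (\psi_\ida \xi)_\ido$ for every $\ida$, so $C_\xi$ is the image of the finite orbit $I_K\xi$ under the projection to the $\ido$-component.

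For $(3) \Rightarrow (4)$: assuming the orbit is finite, I build the DFAO $\mathfrak{A}_\tau=(S,\delta,s_0,\tau)$ with state set $S := I_K\xi$, initial state $s_0 := \xi$, transition $\delta(\eta,\idp) := \psi_\idp \eta$ (well-defined as a map $S\times P_K \to S$ because the orbit is $\psi$-stable), and output $\tau(\eta) := \eta_\ido$. The commutativity $\psi_\idp \psi_\idq = \psi_\idq \psi_\idp$ ensures that for a word $u = \idp_1\cdots \idp_r$ with $\ida = \idp_1\cdots \idp_r$, one has $\delta^*(s_0,u) = \psi_\ida \xi$, and then $\tau(\psi_\ida \xi) = (\psi_\ida \xi)_\ido = \xi_\ida$, showing $\xi$ is automatic. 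Conversely $(4) \Rightarrow (2)$ is trivial: for any DFAO $\mathfrak{A}_\tau$ with finite state set $S$, every coefficient $\xi_\ida = f_{\mathfrak{A}_\tau}([\ida])$ lies in the finite image $\tau(S)$, hence $C_\xi$ is finite.

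There is no real obstacle in the proof itself—the only nontrivial input, finiteness of the orbit from finiteness of the coefficients, is already isolated in Lemma \ref{orbit finiteness}, whose proof relied crucially on the congruence $\xi_{\idp\ida} \equiv \xi_\ida \bmod \idp^{1+v_\idp(\ida)}$ from Proposition \ref{presentation}. The one point that deserves a brief sanity check in the write-up is the well-definedness of $\delta$ in step $(3)\Rightarrow(4)$: one should remark that $I_K\xi$ being stable under each $\psi_\idp$ is immediate from its definition, so the transition indeed lands in $S$.
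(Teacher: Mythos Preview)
Your proof is correct and follows essentially the same route as the paper: the paper also treats $(1)\Leftrightarrow(2)$ and $(4)\Rightarrow(2)$ as easy, invokes Lemma \ref{orbit finiteness} for $(2)\Rightarrow(3)$, and builds the identical DFAO (state set $I_K\xi$, transition by $\psi_\idp$, output at the unit ideal) for $(3)\Rightarrow(4)$. Your extra direct argument for $(3)\Rightarrow(2)$ is a pleasant addition but not needed for the cycle; otherwise the two proofs coincide.
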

\begin{proof}
The implications $(1) \Leftrightarrow (2)$ and $(4) \Rightarrow (2)$ are easy; the implication $(2) \Rightarrow (3)$ is proved in Lemma \ref{orbit finiteness}. To see $(3)\Rightarrow (4)$, assume that $I_K \xi$ is finite, from which we construct a DFAO $\mathfrak{A}_\tau = (S,\delta,s_0,\tau)$ that generates $\xi$. The state set $S$ is $I_K \xi$, and the transition function $\delta: I_K \xi \times P_K \rightarrow I_K \xi$ is given by $\delta(\eta,\idp) := \psi_\idp \eta$. The initial state $s_0 \in I_K \xi$ is $\xi$ itself, and the output function $\tau: I_K \xi \rightarrow O_K$ is given by $\tau(\eta) := \eta_1$, where $1$ denotes the unit ideal $(1)=O_K \in I_K$. Since we generally have $(\psi_\ida \eta)_\idb = \eta_{\ida \idb}$, it follows that $\tau(\delta(\xi,\ida)) = \xi_\ida$. This means that the DFAO constructed here indeed generates $\xi$. This completes the proof. 
\end{proof}

Based on this case, we prove the general case. Let $L/K$ be a finite extension, and in what follows, let $N_{L\mid K} = N: I_L \rightarrow I_K$ be the monoid homomorphism determined by the assignment $N\idP:= \idp^f$ for $\idP \in P_L$ and $\idp \in P_K$ such that $\idP \mid \idp$ with the inertia degree $f = f_{\idP \mid \idp}$. With this map $N$, for each $\xi \in O_L^{I_K}$, we can define $N^* \xi \in O_L^{I_L}$ as follows:
\begin{eqnarray}
  N^* \xi &:=& ( \xi_{N \idA} )_{\idA \in I_L}.
\end{eqnarray}
Concerning this assignment $N^*: O_L^{I_K} \rightarrow O_L^{I_L}$, we have the following: 

\begin{lem}
 If $\xi \in W_{O_K}(O_L)$, then $N^* \xi \in W_{O_L}(O_L)$; in particular, if $\xi \in W_{O_K}^a(O_L)$, then $N^* \xi \in W_{O_L}^a(O_L)$. 
\end{lem}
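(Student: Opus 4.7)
The plan is to verify the congruences defining $W_{O_L}(O_L)$ directly, using Proposition \ref{presentation} in the base field $L$ together with Lemma \ref{bound on coefficients} applied to $\xi$. Concretely, Proposition \ref{presentation} (with $L$ in place of $K$) tells us that
\[
 W_{O_L}(O_L) = \bigl\{ \eta \in O_L^{I_L} \mid \eta_{\idP\idA} \equiv \eta_\idA \mod \idP^{1+v_\idP(\idA)} \ (\forall \idP \in P_L, \forall \idA \in I_L) \bigr\}.
\]
So for $\eta := N^*\xi$, with $\eta_\idA = \xi_{N\idA}$, the task reduces to showing $\xi_{N(\idP\idA)} \equiv \xi_{N\idA} \mod \idP^{1+v_\idP(\idA)}$ for every $\idP \in P_L$ and $\idA \in I_L$.

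The key numerical input is the comparison $v_\idp(N\idA) \geq f \cdot v_\idP(\idA)$ whenever $\idP$ lies over $\idp$ with inertia degree $f$: writing $\idA = \prod_{\idQ} \idQ^{n_\idQ}$ one has $N\idA = \prod_{\idQ} \idp_\idQ^{f_{\idQ}n_\idQ}$ (where $\idp_\idQ$ is the prime of $K$ below $\idQ$ and $f_\idQ$ its inertia degree), so
\[
 v_\idp(N\idA) \;=\; \sum_{\idQ \mid \idp} f_{\idQ\mid\idp} n_\idQ \;\geq\; f \cdot n_\idP \;=\; f \cdot v_\idP(\idA).
\]
Writing $\idP\idA$ in place of $\idA$ gives $N(\idP\idA) = \idp^f \cdot N\idA$, and Lemma \ref{bound on coefficients} yields
\[
 \xi_{\idp^f \cdot N\idA} \;\equiv\; \xi_{N\idA} \mod \idP^{1+\lfloor v_\idp(N\idA)/f \rfloor}.
\]
The inequality above ensures $\lfloor v_\idp(N\idA)/f \rfloor \geq v_\idP(\idA)$, so the congruence modulo $\idP^{1+v_\idP(\idA)}$ follows, proving $N^*\xi \in W_{O_L}(O_L)$.

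For the second assertion, given a DFAO $\mathfrak{A}_\tau=(S,\delta,s_0,\tau)$ over input alphabet $P_K$ and output alphabet $O_L$ that generates $\xi$, we build a DFAO $\mathfrak{A}'_\tau = (S,\delta',s_0,\tau)$ over input alphabet $P_L$ (still allowed to be infinite) and output alphabet $O_L$ by setting, for each $\idP \in P_L$ above $\idp \in P_K$ with inertia degree $f$,
\[
 \delta'(s,\idP) \;:=\; \delta(s, \idp^f) \;=\; \underbrace{\delta(\cdots \delta(\delta(s,\idp),\idp)\cdots, \idp)}_{f \text{ times}}.
\]
Then for any word $u = \idP_1 \cdots \idP_r$ over $P_L$, iterating this rule gives $\delta'(s_0, u) = \delta(s_0, \idp_1^{f_1}\cdots \idp_r^{f_r})$, so $f_{\mathfrak{A}'_\tau}(u) = \xi_{Nu} = (N^*\xi)_u$, as required.

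I do not expect a genuine obstacle here: the first part is a mechanical translation via the presentation of $W_{O_L}(O_L)$, and the only place where one must be slightly careful is remembering that $N$ is defined by $N\idP = \idp^f$ (not $\idp$), which is exactly what lets Lemma \ref{bound on coefficients} apply with the right exponent to match the $W_{O_L}(O_L)$-side congruence modulo $\idP^{1+v_\idP(\idA)}$. The automaticity step is a routine simulation, and the fact that $P_L$ is (in general) an infinite alphabet causes no problem since we allow infinite alphabets throughout (cf.\ Definition \ref{dfa}).
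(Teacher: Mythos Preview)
Your argument for the first assertion is essentially identical to the paper's: reduce via Proposition \ref{presentation} (over $L$) to the congruence $\xi_{\idp^f N\idA} \equiv \xi_{N\idA} \mod \idP^{1+v_\idP(\idA)}$, apply Lemma \ref{bound on coefficients}, and compare exponents using $v_\idp(N\idA) \geq f \cdot v_\idP(\idA)$.

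For the second assertion you take a different route from the paper. The paper simply observes that $C_{N^*\xi} \subseteq C_\xi$ is finite (since $\xi$ is automatic) and then invokes Lemma \ref{base case of arithmetic analogue} for $W_{O_L}(O_L)$, where finiteness of the coefficient set is one of the equivalent characterizations of automaticity. Your approach instead builds an explicit DFAO over $P_L$ by precomposing the transition function with $N$. Both are correct: your construction is more hands-on and avoids citing the base-case equivalence, while the paper's one-line argument exploits that equivalence to sidestep any direct automaton manipulation.
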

\begin{proof}
 Put $\zeta := N^* \xi$ for short. In order to show $\zeta \in W_{O_L}(O_L)$ based on Proposition \ref{presentation}, it suffices to prove that $\zeta_{\idP \idA} \equiv \zeta_\idA \mod \idP^{1+v_\idP(\idA)}$ for every $\idP \in P_L$ and $\idA \in I_L$. In terms of $\xi$, this means that, by $N(\idP \idA) = \idp^f N\idA$:
\begin{eqnarray}
  \xi_{\idp^f N\idA} &\equiv& \xi_{N\idA} \mod \idP^{1+v_\idP(\idA)}.
\end{eqnarray}
To see this, note that by Lemma \ref{bound on coefficients} and $\xi \in W_{O_K}(O_L)$, we have the following congruence for $\ida:=N\idA$:
\begin{eqnarray}
  \xi_{\idp^f \ida} &\equiv& \xi_\ida \mod \idP^{1+\lfloor v_\idp(\ida)/f \rfloor}.
\end{eqnarray}
But notice also that, $v_\idp(\ida) = v_\idp(N\idA) \geq f \cdot v_\idP(\idA)$; that is, $\lfloor v_\idp(\ida)/f \rfloor \geq v_\idP(\idA)$, which completes the proof of the first claim of this lemma. The second claim is immediate because $C_{N^*\xi} \subseteq C_\xi$ is finite for $\xi \in W^a_{O_K}(O_L)$ and by Lemma \ref{base case of arithmetic analogue} applied to $W^a_{O_L}(O_L)$. 
\end{proof}

\begin{lem}[orbit finiteness: generalization of Lemma \ref{orbit finiteness}]
\label{orbit finiteness 2}
 If $\xi \in W_{O_K}(O_L)$ is a Witt vector whose set $\{\xi_\ida \in O_L \mid \ida \in I_K\}$ of coefficient values is a finite set, then the orbit $I_K \xi:= \{\psi_\ida \xi \in W_{O_K}(O_L) \mid \ida \in I_K\}$ under the action of $\psi_\idp$'s is also a finite set. 
\end{lem}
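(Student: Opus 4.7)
The plan is to mimic the proof of Lemma \ref{orbit finiteness}, now combining the congruences of Lemma \ref{bound on coefficients} with the just-proven reduction $\xi \mapsto N^*\xi \in W_{O_L}(O_L)$. First, I would introduce the finite set $T_\xi := \{\xi_\ida - \xi_\idb \mid \xi_\ida \neq \xi_\idb\} \subseteq O_L$ and, for each $\idP \in P_L$, define $m_\idP := \max\{v_\idP(\zeta) \mid \zeta \in T_\xi\}$ (with the convention $m_\idP = 0$ when $T_\xi$ is empty). Since $T_\xi$ is finite, $m_\idP = 0$ for all but finitely many $\idP$, so the set $\Sigma_K := \{\idp \in P_K : \exists\, \idP \mid \idp,\ m_\idP > 0\}$ is finite as well.

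Next, I would derive the core periodicity congruences on $\xi$. For each $\idp \in P_K$, each $\idP \mid \idp$ of inertia degree $f := f_{\idP \mid \idp}$, and each $\ida \in I_K$, Lemma \ref{bound on coefficients} gives $\xi_{\idp^f \ida} - \xi_\ida \in \idP^{1 + \lfloor v_\idp(\ida)/f \rfloor}$. Hence, once $v_\idp(\ida) \geq f \cdot m_\idP$, this difference has $v_\idP$-valuation strictly greater than $m_\idP$, cannot lie in $T_\xi$, and must therefore vanish: $\xi_{\idp^f \ida} = \xi_\ida$. In particular, for every $\idp \notin \Sigma_K$, this equality holds for every $\ida \in I_K$ and every $\idP \mid \idp$.

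Finally, I would apply the just-proven lemma: $\zeta := N^*\xi \in W_{O_L}(O_L)$ has finite coefficient set $\subseteq C_\xi$, so Lemma \ref{orbit finiteness} applied to the Dedekind domain $O_L$ shows the orbit $I_L \zeta$ is finite. Using the identity $\psi_\idA \zeta = N^*\psi_{N\idA}\xi$ (immediate from the definitions), this provides finite control of the values $\xi_{\ida \idb}$ along norm-indices $\idb \in N(I_L)$; combined with the prime-wise periodicity of the previous step (which handles the complement of $N(I_L)$ by reducing each $\idp$-exponent modulo the inertia degree $f_\idP$ for $\idp \notin \Sigma_K$), the assignment $\ida \mapsto \psi_\ida \xi$ factors through a finite quotient of $I_K$, whence $I_K \xi$ is finite. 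I expect the main obstacle to lie in this last bridging step: the prime-by-prime periodicity only delivers a period $f_\idP \geq 2$ at non-split primes $\idp \notin \Sigma_K$, of which there are generally infinitely many, so it is essential that the finite $I_L$-orbit of $\zeta$ together with the $\Lambda$-ring coherence between distinct primes collapses all these periodicities into a single finite overall quotient.
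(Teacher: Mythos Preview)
The obstacle you flag at the end is genuine, and the tools you have assembled do not resolve it. Knowing $\xi_{\idp^{f_\idP}\ida}=\xi_\ida$ for each $\idp\notin\Sigma_K$ only says that $\psi_\idp$ has finite order on the orbit; since there are infinitely many non-split $\idp\notin\Sigma_K$ with $f_\idP\geq 2$, this by itself gives no global bound on $I_K\xi$. Your appeal to the finite $I_L$-orbit of $\zeta=N^*\xi$ does not close the gap either: the identity $\psi_\idA\zeta=N^*\psi_{N\idA}\xi$ only controls the components of $\psi_\ida\xi$ at indices lying in $N(I_L)$, and $N^*$ is not injective on $W_{O_K}(O_L)$, so the finiteness of $\{N^*\psi_\ida\xi\}$ does not yet force $\{\psi_\ida\xi\}$ to be finite. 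Nothing in your outline pins down $\psi_\idp\xi$ itself (as opposed to $\psi_{\idp^{f_\idP}}\xi$) for the infinitely many non-split $\idp\notin\Sigma_K$, and the vague ``$\Lambda$-ring coherence between distinct primes'' is not an argument.

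The paper supplies precisely this missing step. After reducing to the case where $L/K$ is Galois, it uses the Frobenius automorphism: for unramified $\idp$ and $\idP\mid\idp$ one has $\xi_{\idp\ida}\equiv\xi_\ida^{(\frac{L\mid K}{\idP})}\bmod\idP$ for every $\ida$. Both sides lie in the finite set $C_\xi\cup G(L/K)\cdot C_\xi$, so the nonzero differences form a finite set $H_\xi$, which can meet only finitely many $\idP$. Hence for all but a finite exceptional set $E$ of primes one obtains the \emph{equality} $\xi_{\idp\ida}=\xi_\ida^{(\frac{L\mid K}{\idP})}$; that is, $\psi_\idp$ acts on the whole orbit as one of the $|G(L/K)|$ component-wise Galois automorphisms. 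This single finite family replaces your infinitely many independent periods $f_\idP$ and reduces the problem to finitely many primes $\idq\in E$, after which the paper invokes $N^*\xi$ and $I_K^d\subseteq N(I_L)$ to finish. You can in fact reach the same reduction without the Galois step: for $\idp\notin\Sigma_K$ the $\Lambda$-ring congruence $\xi_{\idp\ida}\equiv\xi_\ida^{\#k_\idp}\bmod\idP$ together with $m_\idP=0$ forces $\xi_{\idp\ida}$ to be the \emph{unique} element of $C_\xi$ in its residue class mod $\idP$, so $\xi_\ida\mapsto\xi_{\idp\ida}$ is a well-defined self-map of the finite set $C_\xi$, hence one of finitely many. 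Some version of this step is indispensable; your proposal lacks it.
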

\begin{proof}
We may assume that $L/K$ is galois without loss of generality because we have a $\Lambda_{O_K}$-ring injection $W_{O_K}(O_L) \hookrightarrow W_{O_K}(O_{L'})$ for an arbitrary extension $L \subseteq L'$. Let $\xi \in W_{O_K}(O_L)$ satisfy the condition. In order to estimate the size of $I_K \xi$, we first study the action of $\psi_\idp$'s on $\xi$ for unramified prime ideals $\idp \in P_K$: Let $\idp \in P_K$ be unramified in $L$ and $\ida \in I_K$ be arbitrary. Also let $\idP \in P_L$ be a prime ideal of $O_L$ over $\idp$, for which we denote by the Artin symbol $(\frac{L\mid K}{\idP}) \in G(L/K)$ its Frobenius automorphism. Then by definition of Frobenius automorphism:
\begin{eqnarray}
  \xi_\ida^{(\frac{L\mid K}{\idP})} &\equiv& \xi_\ida^{\#k_\idp} \mod \idP.
\end{eqnarray}
On the other hand, since $\xi \in W_{O_K}(O_L)$, we also have:
\begin{eqnarray}
  \xi_{\idp\ida} &\equiv& \xi_\ida^{\#k_\idp} \mod \idP.
\end{eqnarray}
Therefore, we get $\xi_{\idp \ida} - \xi_\ida^{(\frac{L\mid K}{\idP})} \in \idP$ for every unramified $\idp \in P_K$ and an arbitrary $\ida \in I_K$. Here, notice that the following set is a finite set, because $C_\xi = \{\xi_\ida \mid \ida \in I_K\}$ and $G(L/K)$ are finite sets:
\begin{eqnarray}
  H_\xi &:=& \bigl\{\xi_{\idp \ida} - \xi_\ida^{(\frac{L\mid K}{\idP})} \in O_L \mid \ida \in I_K, \textrm{$\idp$ is unramified in $L$} , \idP \mid \idp, \textrm{and $\xi_{\idp\ida} \neq \xi_\ida^{(\frac{L\mid K}{\idP})}$} \bigr\}. 
\end{eqnarray}
This finiteness implies that $H_\xi$ can intersect with only finitely many $\idP \in P_L$; therefore, for all but finitely many $\idp$ unramified in $L$, one must have the following \emph{equality} for all $\ida \in I_K$:
\begin{eqnarray}
\label{equality for bound}
  \xi_{\idp \ida} &=& \xi_\ida^{(\frac{L\mid K}{\idP})}.
\end{eqnarray}
Let us denote by $\idp'_1, \cdots, \idp'_r$ such exceptional unramified prime ideals. Also let $\idp_1,\cdots,\idp_s \in P_K$ be the prime ideals of $K$ \emph{ramified} in $L$, and put $E:=\{\idp_1,\cdots,\idp_s, \idp'_1, \cdots,\idp'_r\} = \{\idq_1,\cdots,\idq_N\}$. This argument means that, by the equality (\ref{equality for bound}), the values of all $\xi_\ida$'s are determined by the values at those $\ida\in I_K$ in which $\idp \notin E$ is not contained, i.e.\ $\ida$ is divisible only by $\idq_j \in E$. In addition to this fact, moreover, notice that (i) $I_K^d \subseteq N_{L\mid K} I_L \subseteq I_K$ with the degree $d=[L:K]$, (ii) $I_L\cdot N^*\xi$ is a finite set by Lemma \ref{base case of arithmetic analogue} applied to $L$, and (iii) $C_\eta \subseteq C_\xi$ for every $\eta \in I_K \xi$. Combining these facts, it then follows that $\eta \in I_K \xi$ is determined by the coefficients at those $\ida \in N_{L\mid K} I_L \subseteq I_K$ (the size of which are bounded by some $\ido \in I_L$ as seen in the argument of Lemma \ref{orbit finiteness} applied to $I_L \cdot N^*\xi$) and actions of $\psi_\idq$'s for finitely many $\idq \in E$, each of which is of finite order (at most $d$) modulo $N_{L\mid K} I_L$. This implies that $I_K \xi$ is indeed a finite set as requested. This completes the proof. 
\end{proof}

This lemma concludes the arithmetic analogue of Christol's theorem in the general case (Theorem \ref{arithmetic analogue}). As in the case of Lemma \ref{base case of arithmetic analogue}, we restate it in the following refined form. Since the proof is the same as Lemma \ref{base case of arithmetic analogue}, replacing Lemma \ref{orbit finiteness} with Lemma \ref{orbit finiteness 2}, we ommit the proof. 

\begin{thm}[arithmetic analogue of Christol's theorem]
\label{refined arithmetic analogue}
For every finite extension $L/K$, the following four conditions on $\xi \in W_{O_K}(O_L)$ are equivalent:
\begin{enumerate}
 \item $\xi$ is integral over $O_K$;
 \item the coefficients $C_\xi=\{ \xi_\ida \in O_L \mid \ida \in I_K\}$ form a finite set;
 \item the orbit $I_K\xi=\{\psi_\ida \xi \in W_{O_K}(O_L) \mid \ida \in I_K\}$ is a finite set;
 \item $\xi$ is automatic. 
\end{enumerate}
\end{thm}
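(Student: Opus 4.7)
The plan is to establish the cycle $(1) \Leftrightarrow (2) \Rightarrow (3) \Rightarrow (4) \Rightarrow (2)$, closely mirroring the argument for the base case in Lemma \ref{base case of arithmetic analogue}, with Lemma \ref{orbit finiteness 2} taking the place of Lemma \ref{orbit finiteness}.

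First I would dispatch the easy directions. For $(1) \Leftrightarrow (2)$ the argument is component-wise inside $W_{O_K}(O_L) \subseteq O_L^{I_K}$: if $\xi$ satisfies a monic $g(x) \in O_K[x]$, then each $\xi_\ida \in O_L$ is a root of $g$, giving $(1) \Rightarrow (2)$; conversely, if $C_\xi \subseteq O_L$ is finite then, since $O_L/O_K$ is integral, every element of $C_\xi$ has a minimal polynomial over $O_K$, and the product $g$ of these finitely many polynomials is a monic element of $O_K[x]$ vanishing on every $\xi_\ida$, hence vanishing on $\xi$ in $W_{O_K}(O_L)$. The implication $(4) \Rightarrow (2)$ is immediate: if a DFAO $\mathfrak{A}_\tau = (S,\delta,s_0,\tau)$ generates $\xi$, then $C_\xi \subseteq \tau(S)$, and the latter is finite.

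The heart of the argument is $(2) \Rightarrow (3)$, which is already exactly Lemma \ref{orbit finiteness 2}. This is the step I expect to be the main obstacle, and it was isolated into a separate lemma for precisely this reason; the delicate point is that there are infinitely many shifts $\psi_\idp$ acting on $\xi$, which was handled by pulling $\xi$ back to $N^* \xi \in W_{O_L}(O_L)$, applying the base case Lemma \ref{orbit finiteness} there, and then reducing the action of the remaining $\psi_\idp$'s --- except for the finitely many ramified $\idp$ and finitely many exceptional unramified ones --- to the Artin Frobenius acting on $C_\xi$.

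Finally, for $(3) \Rightarrow (4)$ I would mimic the construction in Lemma \ref{base case of arithmetic analogue}: take $S := I_K \xi$ (finite by $(3)$), define $\delta \colon S \times P_K \to S$ by $\delta(\eta, \idp) := \psi_\idp \eta$, set $s_0 := \xi$, and let $\tau(\eta) := \eta_1$ where $1$ denotes the unit ideal $(1) = O_K \in I_K$. Using the identity $(\psi_\ida \eta)_\idb = \eta_{\ida \idb}$, the extended transition on the input word $\idp_1 \cdots \idp_r$ yields output $\tau(\delta^*(\xi, \idp_1 \cdots \idp_r)) = (\psi_\ida \xi)_1 = \xi_\ida$ for $\ida = \idp_1 \cdots \idp_r$, so this DFAO indeed generates $\xi$ and the cycle closes.
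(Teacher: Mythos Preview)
Your proposal is correct and follows exactly the approach the paper takes: the paper explicitly says the proof ``is the same as Lemma \ref{base case of arithmetic analogue}, replacing Lemma \ref{orbit finiteness} with Lemma \ref{orbit finiteness 2},'' and your cycle $(1)\Leftrightarrow(2)\Rightarrow(3)\Rightarrow(4)\Rightarrow(2)$ with the DFAO built from the orbit $I_K\xi$ is precisely that argument spelled out.
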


Finally, furthermore, let $\bar{K}$ be the separable closure of $K$. Note that, if $\xi \in W_{O_K}(O_{\bar{K}})$ is automatic, its coefficients $\xi_\ida$ are all in $O_L$ for some finite extension $L/K$. Therefore, one finds that $\xi$ is automatic as an element of $W_{O_K}(O_L)$ as well. So we restate Theorem \ref{arithmetic analogue} also as in the following form:

\begin{cor}
\label{generic arithmetic analogue}
 A Witt vector $\xi \in W_{O_K}(O_{\bar{K}})$ is integral over $O_K$ if and only if it is automatic; and we have the following equality among the $\Lambda_{O_K}$-rings of Witt vectors:
\begin{eqnarray}
  W^a_{O_K}(O_{\bar{K}}) &=& \bigcup_{L/K} W_{O_K}^a(O_L). 
\end{eqnarray}
\end{cor}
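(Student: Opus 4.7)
The plan is to deduce this corollary from the finite-extension version (Theorem \ref{refined arithmetic analogue}) by observing that both ``automatic'' and ``integral over $O_K$'' force the coefficient set of a Witt vector to be finite, and therefore to lie inside $O_L$ for some finite extension $L/K$. This reduces the case of the separable closure to a case that has already been settled.

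First I would establish the ring-theoretic equality $W^a_{O_K}(O_{\bar{K}}) = \bigcup_{L/K} W^a_{O_K}(O_L)$. The inclusion $\supseteq$ is immediate from the canonical injection $W_{O_K}(O_L) \hookrightarrow W_{O_K}(O_{\bar{K}})$ induced by $O_L \hookrightarrow O_{\bar{K}}$, together with the fact that any DFAO with output alphabet $O_L$ is automatically a DFAO with output alphabet $O_{\bar{K}}$. For the converse, suppose $\xi \in W^a_{O_K}(O_{\bar{K}})$ is witnessed by a DFAO $\mathfrak{A}_\tau=(S,\delta,s_0,\tau)$ with $\tau: S \to O_{\bar{K}}$. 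Since $S$ is finite, the image $\tau(S)$ is a finite subset of $O_{\bar{K}}$, hence lies in $O_L$ for some sufficiently large finite extension $L/K$. Every coefficient $\xi_\ida = \tau(\delta(s_0,\ida))$ then belongs to $O_L$, so $\xi$ may be viewed as an element of $W_{O_K}(O_L)$ embedded in $W_{O_K}(O_{\bar{K}})$; the same DFAO, now regarded as having output alphabet $O_L$, witnesses $\xi \in W^a_{O_K}(O_L)$.

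Next I would handle the equivalence between automaticity and integrality over $O_K$ for $\xi \in W_{O_K}(O_{\bar{K}})$. If $\xi$ is automatic, the previous step places it in $W^a_{O_K}(O_L)$ for some finite $L/K$, and Theorem \ref{refined arithmetic analogue} then gives integrality over $O_K$; integrality is then preserved under the inclusion $W_{O_K}(O_L) \hookrightarrow W_{O_K}(O_{\bar{K}})$, since it is detected by a monic polynomial in $O_K[x]$. Conversely, suppose $\xi \in W_{O_K}(O_{\bar{K}})$ satisfies a monic polynomial $P(x) \in O_K[x]$ of degree $n$. For each $\ida \in I_K$, the projection $\pi_\ida: W_{O_K}(O_{\bar{K}}) \to O_{\bar{K}}$ sending $\xi \mapsto \xi_\ida$ is an $O_K$-algebra homomorphism, so $P(\xi_\ida)=0$; since $P$ has at most $n$ roots in $O_{\bar{K}}$, the set $C_\xi$ is finite and lies in some $O_L$. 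Hence $\xi \in W_{O_K}(O_L)$ and Theorem \ref{refined arithmetic analogue} concludes that $\xi$ is automatic.

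The one remaining technical point is the implicit assertion that a Witt vector $\xi \in W_{O_K}(O_{\bar{K}})$ all of whose coefficients lie in $O_L$ already belongs to the sub-$\Lambda$-ring $W_{O_K}(O_L)$; this can be verified by induction along the defining filtration $W_{O_K}(-) = \bigcap_n U_n(-)$, using that $O_L^{I_K}$ is stable under the shifts $\psi_\idp$ and that the congruence defining $U_{n+1}$ in $O_{\bar{K}}^{I_K}$ descends to one in $O_L^{I_K}$ once one restricts to elements with coefficients in $O_L$. I expect no genuine obstacle here; the corollary is essentially a matter of patching the finite-extension theorem across the filtered union $O_{\bar{K}} = \varinjlim_L O_L$.
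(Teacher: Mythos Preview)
Your reduction to finite extensions is correct and is exactly what the paper does: both integrality and automaticity force the coefficient set $C_\xi$ to be finite, hence contained in some $O_L$, after which Theorem~\ref{refined arithmetic analogue} applies. The only substantive content is the ``remaining technical point'' you isolate, namely that $O_L^{I_K} \cap W_{O_K}(O_{\bar{K}}) \subseteq W_{O_K}(O_L)$.

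Here your proposed induction along the filtration $\{U_n\}$ hides a real difficulty that you underestimate. To pass from $U_n$ to $U_{n+1}$ you need: if $\eta := \psi_\idp \xi - \xi^{\#k_\idp}$ lies in $O_L^{I_K}$ and in $\idp\, U_n(O_{\bar{K}})$, then $\eta \in \idp\, U_n(O_L)$. But a decomposition $\eta = \sum_i r_i \zeta_i$ with $r_i \in \idp$ and $\zeta_i \in U_n(O_{\bar{K}})$ gives no reason for the individual $\zeta_i$ to have coefficients in $O_L$, so ``the congruence descends'' is not automatic. For non-principal $\idp$ there is no way to simply divide out and recover a single witness in $O_L^{I_K}$.

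The paper sidesteps the filtration entirely: it shows directly that $B := O_L^{I_K} \cap W_{O_K}(O_{\bar{K}})$ is itself a $\Lambda$-ring, whence $B \subseteq W_{O_K}(O_L)$ by the maximality of $W_{O_K}(O_L)$ among sub-$\Lambda$-rings of $O_L^{I_K}$ (Remark~\ref{maximality of W}). The tool is precisely the localization argument of Proposition~\ref{presentation}: after localizing at $\idp$ the ideal becomes principal, $\idp O_{K,\idp} = (\pi)$, so one may write $\psi_\idp \xi - \xi^{\#k_\idp} = \pi u \cdot \zeta$ for a \emph{single} $\zeta$ and a unit $u$; then $\zeta = (\pi u)^{-1}(\psi_\idp \xi - \xi^{\#k_\idp})$ visibly has coefficients in $O_L$, hence lies in $B$. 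Intersecting over all localizations gives $\psi_\idp \xi - \xi^{\#k_\idp} \in \idp B$. Your outline becomes complete once the filtration induction is replaced by this argument.
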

\begin{proof}
 This can be proved by a similar argument to Proposition \ref{presentation}: that is, it suffices to show $O_L^{I_K} \cap W(O_{\bar{K}})$ is a $\Lambda$-ring, which implies $O_L^{I_K} \cap W(O_{\bar{K}}) \subseteq W_{O_K}(O_L)$; but this follows by a localization argument as in the case of Proposition \ref{presentation}. 
\end{proof}

\section{$\Lambda$-rings generated by automatic Witt vectors}
\label{s4}
\noindent
This section relates our result on automatic Witt vectors $\xi \in W_{O_K}(O_{\bar{K}})$ (\S \ref{s3}) and the strong classification result of $\Lambda_{O_K}$-rings by Borger and de Smit \cite{Borger_Smit1,Borger_Smit2}. Similarly to that Bridy \cite{Bridy} considered algebraic curves for formal power series $\xi \in \fld_q[[t]]$ algebraic over $\fld_q[t]$ in order to estimate the size of DFAOs generating them, we consider $\Lambda$-rings generated by integral Witt vectors $\xi \in W_{O_K}(O_{\bar{K}})$; here, it is aimed to obtain from \cite{Borger_Smit2} a strong description of coefficients of integral Witt vectors (cf.\ Corollary \ref{description of integral witt vectors}). 

To be precise, Borger and de Smit \cite{Borger_Smit1,Borger_Smit2} classified those $\Lambda_{O_K}$-rings that are finite {\'e}tale over $K$ and have integral models (cf.\ \S \ref{s2}), applying class field theory. Specifically, in terms of category theory, they proved that the category of such $\Lambda$-rings is dually equivalent to the category $\cl DR_K$ of finite $DR_K$-sets, where $DR_K$ denotes the \emph{Deligne-Ribet monoid}, a profinite monoid studied by Deligne and Ribet \cite{Deligne_Ribet} (cf.\ \S \ref{s5}). In elementary words of Witt vectors, their classification implies that (integral models of) every such a $\Lambda$-ring embeds into (finite product of) the $\Lambda$-ring consisting of those Witt vectors $\xi \in W_{O_K}(O_{\bar{K}})$ whose components $\xi_\ida$ ($\ida \in I_K$) are \emph{periodic} with respect to a fixed modulus $\idf \in I_K$ of $K$ in the following sense (cf.\ \cite{Borger_hand}):
\begin{eqnarray}
  \xi_\ida = \xi_\idb &\textrm{if}& \ida \sim_\idf \idb,
\end{eqnarray}
where we denote $\ida \sim_\idf \idb$ if there exists a totally positive number $t \in K$ such that $t \in 1+\idf \idb^{-1}$ and $\ida \idb^{-1} = (t)$, which is a monoid congruence on $I_K$ of finite index \cite{Deligne_Ribet}; this monoid congruence $\sim_\idf$ is used to define the Deligne-Ribet monoid $DR_K := \lim_\idf I_K/\sim_\idf$. 

It is clear that if the components $\xi_\ida$ of a Witt vector $\xi \in W_{O_K}(O_{\bar{K}})$ are periodic with respect to some $\idf \in I_K$ in the above sense, it is automatic (and integral) because $\sim_\idf$ is a monoid congruence of $I_K$ of finite index. The purpose of this section is to show the inverse: That is, if a Witt vector $\xi \in W_{O_K}(O_{\bar{K}})$ is integral over $O_K$ (automatic), then its coefficients $\xi_\ida$ must be periodic with respect to some modulus $\idf$ of $K$; in other words, the subring of $W_{O_K}(O_{\bar{K}})$ consisting of those Witt vectors whose components are periodic with respect to some $\idf$ coicides with the integral closure of $O_K$ within $W_{O_K}(O_{\bar{K}})$, which is precisely $W^a_{O_K}(O_{\bar{K}})$. 

To obtain this refined characterization of integral Witt vectors, it suffices to see the following general fact: (We ommit the subscripts $O_K$ from $W_{O_K}$.)

\begin{prop}
\label{integrality}
Let $\xi \in W(O_{\bar{K}})$ be integral over $O_K$ and denote by $O_K \langle \xi \rangle$ the $O_K$-algebra generated by the set $I_K \xi$ over $O_K$. Then the $\Lambda$-ring $K \otimes O_K\langle \xi \rangle$ is finite {\'e}tale over $K$ and has an integral $\Lambda$-model, the maximum one being given by $W^a(O_{\bar{K}}) \cap K \otimes O_K\langle \xi \rangle$. 
\end{prop}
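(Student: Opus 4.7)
The plan is to establish in order: (a) $O_K\langle\xi\rangle$ is itself a sub-$\Lambda$-ring of $W(O_{\bar{K}})$ that is finite over $O_K$; (b) $K \otimes O_K\langle\xi\rangle$ is finite {\'e}tale over $K$; and (c) the maximum integral $\Lambda$-model of $K \otimes O_K\langle\xi\rangle$ is $B := W^a(O_{\bar{K}}) \cap K \otimes O_K\langle\xi\rangle$. For (a), the commutativity $\psi_\idp \psi_\ida = \psi_{\idp \ida}$ and the fact that each $\psi_\idp$ is an $O_K$-algebra endomorphism force the $O_K$-subalgebra generated by the orbit $I_K\xi$ to be stable under every $\psi_\idp$, so $O_K\langle\xi\rangle$ is a sub-$\Lambda$-ring. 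Applying $\psi_\ida$ to a monic integrality relation of $\xi$ over $O_K$ shows every element of $I_K \xi$ is integral over $O_K$, and by Corollary \ref{generic arithmetic analogue} the orbit $I_K\xi$ is a finite set. Hence $O_K\langle\xi\rangle$ is an $O_K$-algebra generated by finitely many integral elements, so finite as an $O_K$-module.

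For (b), I would embed the finite-dimensional $K$-algebra $K \otimes O_K\langle\xi\rangle$ into $\bar{K}^{I_K}$ by collecting the $K$-algebra maps extending the coordinate projections $O_K\langle\xi\rangle \hookrightarrow O_{\bar{K}}^{I_K} \to O_{\bar{K}}$; injectivity follows from flatness of $K$ over $O_K$ together with $K \otimes_{O_K} O_{\bar{K}} = \bar{K}$, and finite-dimensionality of the source reduces the target to some $\bar{K}^N$. Any finite $K$-subalgebra of $\bar{K}^N$ is reduced, and a finite reduced commutative $K$-algebra is a finite product of finite (automatically separable, since $K$ has characteristic zero) field extensions of $K$; this yields {\'e}taleness.

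For (c), existence of an integral $\Lambda$-model is already witnessed by $O_K\langle\xi\rangle$ itself. To see $B$ is the maximum, let $A \subseteq K \otimes O_K\langle\xi\rangle$ be an arbitrary integral $\Lambda$-model. Each $a \in A$ is integral over $O_K$, so every coordinate $a_\ida$ (via the maps of (b)) is integral over $O_K$ and therefore lies in $O_{\bar{K}}$; thus $A \subseteq O_{\bar{K}}^{I_K}$. Being a sub-$\Lambda$-ring of $O_{\bar{K}}^{I_K}$, the maximality principle of Remark \ref{maximality of W} forces $A \subseteq W(O_{\bar{K}})$, and Corollary \ref{generic arithmetic analogue} then gives $A \subseteq W^a(O_{\bar{K}})$; combined with $A \subseteq K \otimes O_K\langle\xi\rangle$, we get $A \subseteq B$. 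Finally $B$ itself is an integral $\Lambda$-model: it is a sub-$\Lambda$-ring as an intersection of sub-$\Lambda$-rings; it contains $O_K\langle\xi\rangle$, so $K \otimes B = K \otimes O_K\langle\xi\rangle$; and it is the integral closure of $O_K$ in the finite {\'e}tale $K$-algebra $K \otimes O_K\langle\xi\rangle$, hence finite over $O_K$. The main obstacle is precisely showing that an arbitrary integral $\Lambda$-model lies inside the genuine Witt ring $W(O_{\bar{K}})$ rather than merely inside $K \otimes W(O_{\bar{K}})$; the coordinate-wise integrality argument combined with Remark \ref{maximality of W} is what makes this work.
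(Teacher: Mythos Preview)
Your overall architecture matches the paper's, and your treatment of finiteness of $O_K\langle\xi\rangle$, \'etaleness of $X=K\otimes O_K\langle\xi\rangle$, and the maximality argument (via Remark~\ref{maximality of W} and Corollary~\ref{generic arithmetic analogue}) is fine --- in fact your maximality argument is slightly more explicit than the paper's.

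There is, however, a genuine gap at the $\Lambda$-ring property of the proposed integral models. In (a) you write that $\psi_\idp$-stability of $O_K\langle\xi\rangle$ makes it ``a sub-$\Lambda$-ring,'' and in (c) you write that $B$ ``is a sub-$\Lambda$-ring as an intersection of sub-$\Lambda$-rings.'' Neither is justified. Being a $\psi_\idp$-stable $O_K$-subalgebra of $W(O_{\bar{K}})$ only gives $\psi_\idp a - a^{\#k_\idp}\in \idp W(O_{\bar{K}})\cap A$, and in general $\idp W(O_{\bar{K}})\cap A\neq \idp A$; so the Frobenius-lifting axiom for $A$ itself does not follow. The same issue obstructs the ``intersection of sub-$\Lambda$-rings'' step: you have not shown that $W^a(O_{\bar{K}})$ is a $\Lambda$-ring, and even if it were, $\idp W^a(O_{\bar{K}})\cap \idp X$ need not equal $\idp(W^a(O_{\bar{K}})\cap X)$. (Your side claim that $B$ equals the full integral closure of $O_K$ in $X$ is likewise not established; the paper only uses the inclusion $B\subseteq$ integral closure to get finiteness.)

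This is exactly the point where the paper spends its effort. The paper does \emph{not} claim $O_K\langle\xi\rangle$ is a $\Lambda$-ring; instead it proves directly that $W^a(O_{\bar{K}})$ satisfies the Frobenius-lifting axiom, by a localization argument: for $\zeta\in W^a(O_{\bar{K}})$ and a uniformizer $\pi$ at $\idp$, write $\psi_\idp\zeta-\zeta^{\#k_\idp}=\pi u\cdot\eta$ with $\eta\in W(O_{\bar{K}})$, and observe that the coefficient set $\{\eta_\ida\}=\{(\pi u)^{-1}(\zeta_{\idp\ida}-\zeta_\ida^{\#k_\idp})\}$ is finite because $\{\zeta_\ida\}$ is; hence $\eta\in W^a(O_{\bar{K}})$ and $\psi_\idp\zeta-\zeta^{\#k_\idp}\in\idp\,O_{K,\idp}\otimes W^a(O_{\bar{K}})$ for every $\idp$, which yields membership in $\idp W^a(O_{\bar{K}})$. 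That argument (or an equivalent one) is what your proof is missing.
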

\begin{proof}
  Throughout this proof, let us write $X=K \otimes O_K \langle \xi \rangle$ for short, which has natural actions of $\psi_\idp$'s to form a $\Lambda$-ring. Since the orbit $I_K \xi$ is finite by assumption (Theorem \ref{refined arithmetic analogue}) and its elements $\eta \in I_K \xi$ are all integral over $O_K$ ($I_K \eta \subseteq I_K \xi$), the $O_K$-algebra $O_K\langle \xi \rangle = O_K[I_K\xi]$ is finite over $O_K$. Moreover, since $O_K \langle \xi \rangle$ is reduced (and finite over $O_K$), it follows that $K \otimes O_K \langle \xi \rangle=X$ is finite {\'e}tale over $K$. Thus it suffices to show that $A:=W^a(O_{\bar{K}}) \cap K \otimes O_K\langle \xi \rangle$ is the maximum integral model of $X$. 

Firstly, since $O_K \langle \xi \rangle \subseteq A$, we have $X = K \otimes A$. Also, since $A$ is integral over $O_K$, $A$ is included in the integral closure of $O_K$ in $X$, which is finite over $O_K$; hence, so is $A$. Thus the rest task, other than the maximality of $A$, is to show that for every $\zeta \in W^a(O_{\bar{K}})$, we have $\psi_\idp \zeta - \zeta^{\#k_\idp} \in \idp W^a(O_{\bar{K}})$, which we prove by localization\footnote{Note that the constructions of this proof are closed in $K \otimes O_K\langle \xi \rangle$; so we focus on the part of $W^a(O_{\bar{K}})$ for simplicity of argument.}. Fix $\zeta \in W^a(O_{\bar{K}})$ and $\idp \in P_K$. First notice again that $ \idp W^a(O_{\bar{K}}) = \bigcap_{\idq \in P_K} \idp (O_{K,\idq} \otimes W^a(O_{\bar{K}}))$. So it suffices to see that $\psi_\idp \zeta - \zeta^{\#k_\idp} \in \idp (O_{K,\idq} \otimes W^a(O_{\bar{K}}))$ for every $\idq \in P_K$. In the case of $\idq \neq \idp$, there is nothing to prove because $\idp (O_{K,\idq} \otimes W^a(O_{\bar{K}})) = O_{K,\idq} \otimes W^a(O_{\bar{K}})$. We prove the case $\idq = \idp$, whence $\idp (O_{K,\idp} \otimes W^a(O_{\bar{K}})) = \pi O_{K,\idp} \otimes W^a(O_{\bar{K}})$ for a uniformizer $\pi \in \idp$. Since $\psi_\idp\zeta - \zeta^{\#k_\idp} \in \pi O_{K,\idp} \otimes W(O_{\bar{K}})$, there exist $\eta \in W(O_{\bar{K}})$ and $u \in O_{K,\idp}$ such that:
\begin{eqnarray*}
  \psi_\idp \zeta - \zeta^{\#k_\idp} &=& \pi u \cdot \eta.
\end{eqnarray*}
Therefore, we obtain $\{\eta_\ida \mid \ida \in I_K \}= \bigl\{(\pi u)^{-1} (\zeta_{\idp \ida} - \zeta_\ida^{\#k_\idp}) \mid \ida \in I_K \bigr\}$. But since $\zeta \in W^a(O_{\bar{K}})$, the latter set (hence the former one) is a finite set. This means that $\eta$ is automatic, thus $\psi_\idp \zeta - \zeta^{\#k_\idp} \in \pi O_{K,\idp} \otimes W^a(O_{\bar{K}})$ as requested. Finally, we see that $A$ is maximum among integral $\Lambda$-models of $X$. Let $B \subseteq X=K\otimes O_K\langle \xi \rangle$ be an integral $\Lambda$-model of $X$, and let $\chi \in B$. Since $\chi$ is integral over $O_K$, its components $\chi_\ida$ are all in $O_{\bar{K}}$ (rather than $\bar{K}$) and thus $\chi \in W(O_{\bar{K}})$. Again, since $\chi \in W(O_{\bar{K}})$ is integral over $O_K$, it is automatic, i.e.\ $\chi \in A$. Therefore, $B \subseteq A$, which completes the proof. 
\end{proof}

As an immediate but non-trivial consequence from \cite{Borger_Smit2}, we obtain the following (above-mentioned) description of integral Witt vectors:

\begin{cor}
\label{description of integral witt vectors}
A Witt vector $\xi \in W_{O_K}(O_{\bar{K}})$ is integral over $O_K$ if and only if its coefficients $\xi_\ida$ are periodic with respect to some modulus $\idf$ of $K$ (and also, if and only if it is automatic).
\end{cor}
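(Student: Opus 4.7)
The result splits into two directions, one trivial and one that invokes the machinery developed earlier in the paper together with the Borger--de Smit classification.

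\textbf{Easy direction.} If the coefficients of $\xi$ are periodic with respect to some modulus $\idf$, then the coefficient set $C_\xi = \{\xi_\ida \mid \ida \in I_K\}$ is a quotient of $I_K/\sim_\idf$, which is finite (\cite{Deligne_Ribet}). Hence $C_\xi$ is finite, and Theorem \ref{refined arithmetic analogue} (applied, for $\xi \in W_{O_K}(O_{\bar K})$, in the form of Corollary \ref{generic arithmetic analogue}) gives that $\xi$ is integral over $O_K$ and automatic. So I will dispatch this implication in one line at the start.

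\textbf{Hard direction.} Assume $\xi \in W_{O_K}(O_{\bar K})$ is integral over $O_K$. The plan is to show that the $I_K$-action on the orbit $I_K\xi$ factors through $I_K/\sim_\idf$ for some $\idf$, and then to read off periodicity of coefficients. By Proposition \ref{integrality}, $X:=K\otimes O_K\langle\xi\rangle$ is a finite \'etale $\Lambda_{O_K}$-ring over $K$ admitting an integral model, so $X$ lies in the category $\C_K$ considered in \cite{Borger_Smit1,Borger_Smit2}. By the Borger--de Smit duality $\C_K \simeq (\cl DR_K)^{op}$, $X$ corresponds to a finite continuous $DR_K$-set; continuity of the action together with $DR_K = \varprojlim_\idf I_K/\sim_\idf$ implies that this action factors through $I_K/\sim_\idf$ for some modulus $\idf$. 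Translating the duality back through the Frobenius liftings, this means that the operators $\psi_\ida$ acting on $X$ (and hence on $O_K\langle\xi\rangle \subseteq X$) satisfy $\psi_\ida = \psi_\idb$ on $X$ whenever $\ida \sim_\idf \idb$.

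\textbf{From operator-periodicity to coefficient-periodicity.} Applying the previous equality to $\xi \in O_K\langle \xi \rangle$, whenever $\ida \sim_\idf \idb$ we have $\psi_\ida \xi = \psi_\idb \xi$ in $W(O_{\bar K})$. Evaluating both sides at the unit ideal component and using $(\psi_\ida \xi)_{(1)} = \xi_\ida$, this yields $\xi_\ida = \xi_\idb$, which is exactly the periodicity condition with modulus $\idf$ in the sense of \cite{Borger_hand}. Combined with the already-established equivalence between integrality and automaticity (Corollary \ref{generic arithmetic analogue}), this completes the corollary.

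\textbf{Main obstacle.} The genuine arithmetic content is already packaged in Proposition \ref{integrality} (which relies on Theorem \ref{refined arithmetic analogue}) and in the Borger--de Smit classification; the only nontrivial maneuver in this corollary is the translation step --- making sure that the abstract $DR_K$-set structure on (the dual of) $X$ really does encode the combinatorial periodicity of the tuple $(\xi_\ida)_\ida$ rather than some weaker relation. The key observation enabling this is that the $\Lambda$-structure on $W(O_{\bar K})$ is defined componentwise via the shifts $\psi_\idp \xi = (\xi_{\idp\ida})_\ida$, so that coincidence of Frobenius liftings on $X$ transfers directly to coincidence of coefficients via the $(1)$-projection.
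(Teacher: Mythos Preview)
Your proposal is correct and follows essentially the same route as the paper's proof: use Proposition \ref{integrality} to place $K\otimes O_K\langle \xi\rangle$ in $\C_K$, invoke the Borger--de Smit equivalence $\C_K \simeq (\cl DR_K)^{op}$ to factor the $\psi_\idp$-action through some $I_K/\!\sim_\idf$, and read off coefficient periodicity. The paper compresses the last step into a single sentence, whereas you have made explicit the passage from $\psi_\ida\xi=\psi_\idb\xi$ to $\xi_\ida=\xi_\idb$ via evaluation at the unit ideal; this is a welcome clarification rather than a different argument.
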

\begin{proof}
 The if part is trivial; we see the only-if part. Let $\xi$ be integral. By the above proposition, the $\Lambda$-ring $X=K \otimes O_K \langle \xi \rangle$ is an object of the category $\C_K$ of $\Lambda_{O_K}$-rings that are finite {\'e}tale over $K$ and have integral models, which is equivalent to $(\cl DR_K)^{op}$ \cite{Borger_Smit2}. Thus the actions of $\psi_\idp$'s on $\xi \in X$ factor through the finite quotient $DR_K \twoheadrightarrow I_K/ \sim_\idf$ for some $\idf$. With respect to this $\idf$, the coefficients $\xi_\ida$ of $\xi$ must be periodic. 
\end{proof}

\section{Concluding remarks}
\label{s5}
\noindent
The major goal of this paper was to prove an arithmetic analogue of Christol's theorem (Theorem \ref{s3}, \S \ref{s3}); combining this variant of Christol's theorem with the result of Borger and de Smit \cite{Borger_Smit1,Borger_Smit2}, we then deduced a description of integral Witt vectors (Corollary \ref{description of integral witt vectors}, \S \ref{s4}). To review these results from a more general standpoint in this section (\S \ref{s5s1} -- \S \ref{s5s2}), we now conclude this paper with a few technical remarks on the above-mentioned equivalence $\C_K \simeq (\cl DR_K)^{op}$ of \cite{Borger_Smit2} and the condition of when one can prove such equivalence; we also provide a few generic example (\S \ref{s5s1}). Moreover, since this paper is a continuation of our previous work \cite{Uramoto16,Uramoto17}, the second subsection (\S \ref{s5s2}) is intended to make the connection between the current work and our previous work \cite{Uramoto16,Uramoto17} smoother; in parcular, since our reformulation of Eilenberg theory in \cite{Uramoto16,Uramoto17} was partially intended to seek a right direction to extend the theory from finite automata to more general computational classes such as those of Turing machines, we clarify the location of the current work and our previous work \cite{Uramoto16,Uramoto17} in a wider context of computational hierarchy. In this relation we also discuss a geometric framework to extend Eilenberg theory. The discussion in \S \ref{s5s2} focuses on the conceptual aspect of our framework; for this purpose and for effectiveness, our argument there is given in informal words. In particular, we assume that the reader is more or less familiar with the concept of Turing machines (cf.\ \cite{Sipser,Hopcroft_Ullman}).

\subsection{Christol's theorem and the equivalence $\C_K \simeq (\cl DR_K)^{op}$}
\label{s5s1}
\noindent
Borger and de Smit \cite{Borger_Smit2} proved the equivalence $\C_K \simeq (\cl DR_K)^{op}$ based on class field theory; and by this result, they established an interesting connection between class field theory and the theory of $\Lambda$-rings. Also, as mentioned above, this equivalence particularly implies that (integral models of) every $\Lambda$-ring $X \in \C_K$ embeds into (finite product of) those consisting of (periodic) Witt vectors. Here, to clarify the relationship between this equivalence and the result developed in this paper we remark that (I) the category $\cl DR_K$ canonically embeds into \emph{the category of DFAs over the alphabet $P_K$}, cf.\ \S 2.2, \cite{Uramoto17} (since $DR_K$ is topologically generated by $P_K$); and (II) the DFA corresponding to a given $X \in \C_K \simeq (\cl DR_K)^{op}$ under this embedding is exactly one which generates (with suitable output functions) those Witt vectors corresponding to elements of (integral models of) $X$. In other words, the equivalence $\C_K \simeq (\cl DR_K)^{op}$ explains how integral elements of each $X \in \C_K$ correspond to automatic Witt vectors. In relation to this, our concern in this paper was to characterize such (automatic) Witt vectors in \emph{automata-free} words, i.e.\ they are precisely integral Witt vectors, to reveal an arithmetic analogue of Christol's theorem (cf.\ \S \ref{s5s2}).

We further remark that a categorical equivalence itself $\C_K \simeq (\cl M)^{op}$ for \emph{some} profinite monoid $M$ can be proved without referring to class field theory; it is for the description of $M$ (as the Deligne-Ribet monoid $DR_K$) where class field theory is really needed. In fact it is not difficult to prove directly that (the opposite of) the category $\C_K$ satisfies the axiom of \emph{semi-galois categories} \cite{Uramoto16,Uramoto17}; and as we proved in \cite{Uramoto17}, every semi-galois category is equivalent to the category of the form $\cl M$ for some profinite monoid $M$ (called the \emph{fundamental monoid} of the semi-galois category). Although this abstract proof of the equivalence $\C_K \simeq (\cl M)^{op}$ does not tell about the structure of $M$\footnote{But it also should be mentioned here that it is possible, without using class field theory, to know the structure of $M$ to some degree by comparing the construction of the fundamental monoid $M$ (cf.\ \S 3, \cite{Uramoto17}) and the definition of (objects of) $\C_K$. In fact, for instance, we can directly prove that every abelian (i.e.\ galois) extension $L/K$ becomes a (maximal) component of (galois) object of $\C_K$ using easy ramification theory and Theorem 1 of \cite{Borger_Smit2} (and using the abstract constructions on galois objects, \S 3 \cite{Uramoto17}); from this it follows that the maximal abelian galois group $G_K^{ab}$ is a quotient of the unit group $M^\times$ (which is actually an isomorphism).}, an apparent merit is that it is readily extended to more general situations: That is, let $R$ be a Dedekind domain with finite residue fields and $K$ be the field of fractions. In the completely same way as the case of number fields, we can define the category, denoted $\C_K$ again, of $\Lambda_R$-rings that are finite {\'e}tale over $K$ and have integral models. Concerning this category, we also have the following classification result:
\begin{prop}
 There exists a profinite monoid $M_K$ such that $\C_K$ is opposite equivalent to $\cl M_K$. 
\end{prop}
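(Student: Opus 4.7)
The plan is to verify that $\C_K^{op}$ satisfies the axioms of a \emph{semi-galois category} in the sense of \cite{Uramoto16,Uramoto17}, and then to invoke the main reconstruction theorem of that theory: every semi-galois category is equivalent to $\cl M$ for a unique (up to isomorphism) profinite monoid $M$, obtained as the profinite endomorphism monoid of a distinguished fiber functor. Applied to $\C_K^{op}$, this will yield both the desired profinite monoid $M_K$ and the equivalence $\C_K^{op} \simeq \cl M_K$.

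The fiber functor $F \colon \C_K^{op} \to \fsets$ would be given by $F(X) := \Hom_K(X,\bar K)$, where $\bar K$ denotes a separable closure of $K$; this is a finite set because each $X \in \C_K$ is finite {\'e}tale over $K$, and it is functorial in $X$ by precomposition. The commuting family $\{\psi_\idp\}_{\idp \in P_K}$ of $\Lambda$-operators on each $X$ induces, again by precomposition, a commuting family of natural endomorphisms of $F$, and $M_K$ will be recovered as the profinite monoid topologically generated by these endomorphisms inside $\End(F)$.

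The remaining semi-galois axioms to verify are essentially: (i) $\C_K^{op}$ admits all finite limits and colimits, equivalently $\C_K$ is closed under finite tensor products over $K$, equalizers, and finite products of $\Lambda_R$-rings of the required type; (ii) every morphism factors as a strict epimorphism followed by a monomorphism; and (iii) $F$ is exact, conservative, and pro-representable by a cofiltered system of connected objects of $\C_K^{op}$. The $\Lambda$-structure transports uniquely along all constructions involved: Frobenius liftings on $X$ and $Y$ determine unique compatible liftings on $X \otimes_K Y$ and on equalizers of $\Lambda$-morphisms, and they restrict to any $\Lambda$-stable subring.

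The main obstacle will be to ensure that subobjects, quotients, and images formed inside $\C_K$ continue to admit integral models, which is needed both for stability of $\C_K$ under the relevant (co)limits and for the reconstruction of connected pro-objects. The argument should parallel the integral-closure step used for number fields in \cite{Borger_Smit1,Borger_Smit2}, now for a general Dedekind base $R$ with finite residue fields: the key inputs are that the integral closure of $R$ inside a finite {\'e}tale $K$-algebra is again a finite $R$-module (so a canonical maximal integral model exists), and that this integral closure is automatically preserved by any Frobenius lifting $\psi_\idp$ since the congruence $\psi_\idp a \equiv a^{\#k_\idp} \mod \idp$ forces $\psi_\idp$ to carry integral elements to integral elements. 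Once (i)--(iii) are in hand, one sets $M_K := \End(F)$ with its natural profinite topology, and the reconstruction theorem of \cite{Uramoto17} produces the equivalence $\C_K^{op} \simeq \cl M_K$ at once.
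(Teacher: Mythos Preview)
Your proposal is correct and follows essentially the same route as the paper: verify that $\C_K^{op}$ is a semi-galois category and then apply the reconstruction theorem of \cite{Uramoto17} to obtain the profinite monoid $M_K$. The paper's sketch is even terser than yours---it simply declares the verification ``almost straightforward'' and points to Borger's result \cite{Borger1} that finite limits and colimits in $\Lambda_R$-rings are computed on underlying $R$-algebras---whereas you spell out the fiber functor, the axioms to check, and the integral-model issue; but the strategy is the same.
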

\noindent
The proof is almost straightforward (hence not discussed here); for this proof one only uses the fact due to Borger \cite{Borger1} that the category of $\Lambda_R$-rings has finite limits and finite colimits so that the underlying rings coincide with those taken in the category of $R$-algebras. From this equivalence $\C_K \simeq (\cl M_K)^{op}$, it follows that $\C_K$ embeds into the category of DFAs over some (possibly infinite) alphabet $\Delta$ that generates $M_K$ topologically. (In other words $M_K$ is a quotient of the free profinite monoid $\widehat{\Delta^*}$.) In general, however, we do not know whether $M_K$ admits an explicit description similar to $DR_K$ (which is a quotient of $\widehat{P_K^*}$; and the fact that we can take $\Delta=P_K$ in the case of number fields $K$ is based on Chebotarev's density theorem, cf.\ \S 3 \cite{Borger_Smit2}). 

We also make a remark on a more generic (topos-theoretic) construction of categorical equivalence $\C \simeq \cl M$, apart from the context of Witt vectors and $\Lambda$-rings, but motivated by Borger's framework of geometry over $\fld_1$ based on \emph{$\Lambda$-algebraic geometry} (cf.\ \cite{Borger}). Let $\gamma: \ff \twoheadrightarrow \E$ be a \emph{surjective} geometric morphism (i.e.\ $\gamma^*$ reflects isomorphisms); an example of such geometric morphisms is the geometric morphism $\gamma: Sp_\integer \rightarrow Sp_{\fld_1}$ from the topos $Sp_\integer$ of spaces over $\integer$ to that $Sp_{\fld_1}$ of spaces over $\fld_1$ constructed by Borger\footnote{Borger's geometric morphism $\gamma: Sp_\integer \rightarrow Sp_{\fld_1}$ is not only surjective but also \emph{essential} (i.e.\ the inverse image functor $\gamma^*$ has a \emph{left} adjoint $\gamma_!$ in addition to the right adjoint $\gamma_*$); but at least for the construction of semi-galois categories, geometric morphisms $\gamma:\ff \twoheadrightarrow \E$ do not need to be essential.} (see \cite{Borger}). Let $S \in \E$ be a (connected) object of $\E$ (thought of as a ``space over $\fld_1$''), for which we define the category $\C_S$ as follows: The objects of $\C_S$ are those arrows $h: T \rightarrow S$ in $\E$ such that $\gamma^*h: \gamma^* T \rightarrow \gamma^* S$ is locally finite, and locally constant in $\ff$; and the arrows from $h: T \rightarrow S$ to $h': T' \rightarrow S$ are those $f:T \rightarrow T'$ in $\E$ such that $h'f=h$. With these data (and a suitable base point), we have the following:
\begin{prop}
 There exists a profinite monoid $M_S$ such that $\C_S$ is (covariantly) equivalent to $\cl M_S$. 
\end{prop}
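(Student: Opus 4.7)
The plan is to verify that $\C_S$ satisfies the axioms of \emph{semi-galois categories} as formulated in \cite{Uramoto17} and then invoke the structure theorem there that every such category is (covariantly) equivalent to $\cl M$ for a profinite monoid $M$, its \emph{fundamental monoid}. The ``suitable base point'' mentioned in the statement will be the basepoint with respect to which the fiber functor (and hence the fundamental monoid) is defined.

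First I would construct the fiber functor $F: \C_S \rightarrow \fsets$. Pick a point $s$ of $\gamma^* S$ in $\ff$; for $h: T \rightarrow S$ in $\C_S$, set $F(h) := s^* \gamma^* T$. Because $\gamma^* h$ is locally finite and locally constant in $\ff$, the fiber over $s$ is a finite set, and this assignment is functorial in $h$. Next, I would check that $\C_S$ has finite limits and finite colimits and that $F$ preserves them. Finite limits in $\C_S$ are computed in the slice $\E / S$: $\gamma^*$ preserves finite limits since it is the inverse image part of a geometric morphism, and being locally finite and locally constant is stable under finite limits in $\ff$. For finite colimits, $\gamma^*$ preserves all colimits, $\E / S$ has finite colimits, and locally finite / locally constant objects are closed under finite colimits in $\ff / \gamma^* S$. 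The functor $F$ preserves finite limits and finite colimits by the same stability properties together with the fact that taking fibers at a point commutes with finite (co)limits in $\ff$.

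Then I would verify the remaining semi-galois axioms: existence of epi-mono factorizations in $\C_S$ (inherited from $\ff / \gamma^* S$ via $\gamma^*$ and reflected back using surjectivity of $\gamma$); conservativity of $F$ (one uses that $\gamma$ is surjective so that $\gamma^*$ reflects isomorphisms, and that a point of a connected component detects isomorphisms among locally finite locally constant objects over $\gamma^* S$); and the pro-representability / Noetherianness condition required in \cite{Uramoto17} for constructing galois objects. Once these axioms are in place, the main theorem of \cite{Uramoto17} produces a profinite monoid $M_S := \lim \End(\eta)$, the limit taken over pointed galois objects $\eta$ of $\C_S$, together with a covariant equivalence $\C_S \simeq \cl M_S$ induced by $h \mapsto F(h)$ with its natural continuous $M_S$-action.

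The main obstacle I expect is the verification of conservativity of $F$ and the existence of enough galois objects in the abstract topos-theoretic setting. In the concrete case $\C_K$ one has tools specific to $\Lambda$-rings and class field theory; here one must rely only on surjectivity of $\gamma$ and suitable connectedness of $S$, and possibly strengthen the hypotheses (for example, assume $S$ is connected in $\E$ and that $\gamma^* S$ has enough points) in order to make the base-point argument go through. A secondary technical point will be to ensure that the inverse-image construction of the fundamental monoid $M_S$ does not depend on the particular point chosen, up to inner conjugation, which is handled exactly as in \S 3 of \cite{Uramoto17}.
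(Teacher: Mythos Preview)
Your proposal is correct in spirit and reaches the same conclusion by invoking the structure theorem of \cite{Uramoto17}, but the paper's argument is organized differently and more economically. Rather than verifying each semi-galois axiom from scratch in the slice $\E/S$, the paper first observes that by the very definition of $\C_S$ the functor $\gamma^*$ factors as an \emph{exact} functor $\C_S \rightarrow \G_{\gamma^* S}$ into the already-known Galois category of locally finite, locally constant objects over $\gamma^* S$; one then chooses any fiber functor $\F$ on $\G_{\gamma^* S}$ and takes the composite $\F \circ \gamma^*$ as the fiber functor on $\C_S$. Exactness of the composite is immediate, and conservativity follows in one line from the surjectivity of $\gamma$ (so $\gamma^*$ reflects isomorphisms) together with conservativity of $\F$. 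This sidesteps the direct verification of epi--mono factorizations, Noetherianness, and existence of galois objects that you flag as obstacles: those are inherited from, or subsumed by, the known structure of $\G_{\gamma^* S}$ via the exact conservative comparison functor. Your route is not wrong, but it reproves by hand what the intermediate Galois category already packages; the paper's factorization is what makes the proof a two-line sketch rather than an axiom-by-axiom check.
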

\begin{proof}[Proof (sketch)]
 Let $\G_{\gamma^* S}$ be the galois category of locally finite and locally constant objects over $\gamma^* S \in \ff$, and $\F: \G_{\gamma^* S} \rightarrow \fsets$ be an arbitrarily chosen fiber functor (here assume its existence). By definition of $\C_S$ there exists an \emph{exact} functor $\C_S \rightarrow \G_{\gamma^* S}$ induced naturally from $\gamma^*: \E \rightarrow \ff$. Also, by the surjectivity of $\gamma$, this functor $\gamma^* : \C_S \rightarrow \G_{\gamma^* S}$ as well as $\F: \G_{\gamma^* S} \rightarrow \fsets$ reflects isomorphisms. Therefore, it is readily seen that the pair $\langle \C_S, \F \circ \gamma^* \rangle$ satisfies the axiom of semi-galois category (\S 2, \cite{Uramoto17}). Hence, in particular, $\C_S$ is equivalent to $\cl M_S$, where $M_S$ is given by the fundamental monoid $\pi_1(\C_S,\F \circ \gamma^*)$ of this semi-galois category.
\end{proof}

As observed from these examples, categorical equivalences of the form $\C \simeq \cl M$ appear naturally and can be proved by formal category-theoretic arguments, although the structure of $M$ here is mysterious only from this formal construction\footnote{Although we shall not discuss here the problem on what this fundamental monoid $M_S$ is for $S \in \E$ (which is beyond the subject of this paper itself), it may be worthwhile to notice from the proof that there exists a canonical homomorphism from the fundamental group $\pi_1(\G_{\gamma^* S},\F)$ of the galois category $\G_{\gamma^* S}$ to the fundamental monoid $M_S = \pi_1(\C_S,\F \circ \gamma^*)$. Therefore, the fundamental monoid $\pi_1(\C_S,\F \circ \gamma^*)$ may play some role also in understanding the fundamental group $\pi_1(\G_{\gamma^* S},\F)$ from a new angle; this should be compared to the relationship between the Deligne-Ribet monoid $DR_K$ (that is a limit of monoids $I_K/\sim_\idf$ of ideal classes) and the maximal abelian galois group $G_K^{ab}$.}. From the viewpoint of automata theory, such phenomena will provide a nice class of problems, say, the problem to classify number-theoretic (or geometric) objects (such as $\Lambda$-schemes) by comparing them with computational hierarchies of finite automata (and formal languages) based on the achievement in the classical Eilenberg theory (cf.\ \cite{Pin,Eilenberg}); indeed, historically speaking, this problem is the very central subject of Christol's theorem. In this paper, we studied this problem in the case of rings of Witt vectors, with clarifying and highlightening its relationship to our axiomatization of Eilenberg theory \cite{Uramoto16,Uramoto17} (cf.\ \S \ref{s5s2}).

\subsection{Canonicity of Eilenberg theory for DFAs and its geometric extension}
\label{s5s2}
\noindent
Finite automata constitute the simplest class of computational models. Informally speaking, among general computational models such as Turing machines, finite automata are characterized as those machines with finite controls whose next step at each computational stage is determined only by the current reading input and state, while general Turing machines can refer to their past behaviors via their read-and-write tapes. In this way Turing machines can realize recursive procedures, while the behaviors of finite automata are strictly bounded\footnote{These informal explanations of behaviors of Turing machines and finite automata can be of course formally stated in terms of purely combinatorial structures. But in our exposition, let us use these informal words because it seems most effective for the expository purpose here.}. There are plenty of variations of computational models (cf.\ e.g.\ \cite{Sipser,Hopcroft_Ullman}), including not only finite automata and Turing machines but those models which restrict the ability of Turing tapes or have other devices alternative to such tapes; each of them defines their respective computational hierarchy. Morally, the central subject of computability and complexity theories is then to understand the structure of hierarchies of these computational models by classifying or separanting them, which includes important open problems such as the $P\neq NP$ problem. 

On one hand, naively speaking, this diversity of computational models makes this field interesting and challenging in that for each specific class of computational models there are their respective technical issues on the dynamics of the models; therefore, modifying the architecture of the computational models, one gets a new class of problems to work with. Having said that, on the other hand, this diversity and arbitrarity of extensions (and modifications) of computational models contradicts to a goal of \emph{systematic} understanding of our computability concept (see also \emph{``the next 700 syndrome''}, \cite{Abramsky}); it seems that this situation prevents this field from being applicable deeply (yet systematically) to other fields such as number theory, despite that this field contains critical ideas to classify mathematical objects.

More formally, on one hand, it is indeed widely recognized that we have already achieved a consensus on an appropriate rigorous definition of the computability concept, thanks to the earlier works due to Turing, Church, Kleene and G{\"o}del in the 1930s. But on the other hand, we do not yet have--- if exists though--- a \emph{canonical} computational model, or at least a canonical understanding of computability concept, from which we are still far. In fact, for the time being, our consensus on the definition of computability concept is just based on (I) mutual equivalences between various computational models (say, Turing machines, $\lambda$-caluculi, recursive functions, generative grammars etc.) and (II) empirical facts that these formal models indeed can implement many of our real algorithms in their respective senses. Strictly speaking, however, it seems that what this equivalence of models actually proves is just the robustness of our computability concept, not the canonicity of these individual models; and more notably, equivalences between distinct computational models are usually proved in a quite ad hoc way and based on arbitrary (sometimes artificial) translations of codings of these models. 

In view of this, it is still meaningful to ask why these distinct computational models could be equivalent, or in other words, what their common principle is. Indeed this question is relevant to the current work and our previous work on an \emph{axiomatization} of Eilenberg theory \cite{Uramoto16,Uramoto17}: As discussed in \cite{Uramoto17}, Eilenberg theory \cite{Eilenberg} concerns a classification of certain hierarchies of finite automata (i.e.\ those called \emph{varieties} of finite automata in particular, cf.\ \S 5, \cite{Uramoto17}\footnote{To be more precise, Eilenberg theory classifies those hierarchies called varieties of \emph{finite actions}, rather than finite automata, which are essentially the same structure as finite automata $(S,\delta,s_0)$, except for that they do not have fixed initial states $s_0$. In some sense, Eilenberg theory dares to ignore the choice of initial states of finite automata because it makes their classification more transparent in that they admit a \emph{variety theorem}, cf.\ \S 5 \cite{Uramoto17}.}); and in a sense, our results in \cite{Uramoto17} state a \emph{canonicity} of these hierarchies of finite automata in that they can be axiomatized purely in terms of formal category-theoretic exactness conditions. In fact, as exemplified in this paper with Witt vectors, it is possible that a category $\C$ (say $\C_K$) whose objects seemingly have nothing to do with finite automata is canonically equivalent to that of finite automata; and (in the case of $\C_K$) this equivalence indicates the fact that every element $\xi \in X$ of each object $X \in \C$ of this category $\C$ can be generated by finite automata, namely, is automatic (cf.\ \S \ref{s5s1}). As mentioned in \S 1, this phenomenon lies precisely in the line of the natural intersection between number theory and automata theory, where one considers which machine can generate sequential (or approximational) representations of number-theoretic objects (say, fixed-base expansions $a_0.a_1a_2a_3\cdots$ of real numbers); put in this line, what we have shown in this paper with \cite{Uramoto16,Uramoto17} can be informally summarized as the fact that ``such generations of number-theoretic objects by machines can be canonically axiomatized in the case of finite automata.'' This gives an ideal form of automata theory, both from the viewpoint of canonicity and applicability. 

\paragraph{Geometric extension of Eilenberg theory}
From the viewpoint of generality, nevertheless, our current framework is restrictive for the time being in that its scope is focused only to the simplest machine classes, i.e.\ finite automata. To cope with this situation, yet keeping some sort of canonicity, we conclude this paper with a discussion on yet another framework, which is not categorical but \emph{geometric}, and seems extendable and sustainable naturally even for the case beyond finite automata\footnote{Nevertheless, this geometric consideration here is also intended to seek a right categorical direction to extend semi-galois categories. In relation to this and our consideration below, we are concerned with Malgrange's differential galois theory \cite{Malgrange} for non-linear differential equations, where galois groups are replaced with \emph{groupoids}. See below.}. We start with our Christol theorem, which suggests a framework where one represents classical computational models by \emph{arithmetic differential equations} in the sense of Buium \cite{Buium_geometry,Buium_differential}.

Putting aside the technical details, what we actually want to express by classical computational models such as Turing machines is usually very simple. For instance, the basic idea behind the formal definition of Turing machines (e.g.\ \cite{Sipser,Hopcroft_Ullman}) is just an intuitive idea of ``a machine that reads and writes symbols on a tape in its computational procedure.'' (See the original paper \cite{Turing} as well.) To formalize such an intuitive idea, it is conventional in computability theory to use discrete geometric structures (such as DFAs, cf.\ Definition \ref{dfa}, \S \ref{s3s1}). But the more we need to model complicated computational procedures, the more it becomes difficult and ad hoc to formalize them only by discrete-geometric structures, ending up with a diversity of computational models with little common principle. This situation is in a sharp contrast to that in physics: While the dynamical systems that physisits are concerned with are also complicated in the same level (or more) as computational (discrete) dynamical systems that we are concerned with, physisists have developed successful standard theories such as general relativity, quantum mechanics or more. Unarguably, the success of physics heavily relies on the existence of suitable geometric frameworks such as differential equations and differential geometry; these geometric frameworks are expressible enough to describe physical dynamical systems in a natural way (that is, one can deduce differential equations modelling physical systems by an intuitive manner, e.g.\ using the idea of infinitesimal). Unfortunately, unlike phisical systems, computational processes that we want to model (say, \emph{recursive} ones in particular) are discrete in nature; and because there is no naive concept of differentiations on discrete objects, we cannot directly simulate the method of physics for our computational dynamical systems in a naive way.

It is in this relation that we pay a special attention to our Christol theorem, where discrete structures (i.e.\ DFAs) appear not directly, but as a consequence of \emph{rigidity} of geometric structures (i.e.\ rigidity of integral Witt vectors; recall our proof of the orbit finiteness, cf.\ Lemma \ref{orbit finiteness}, \S \ref{s3s2}) on which one can consider a sort of differential operations (i.e.\ Frobenius liftings $\psi_\idp$; cf.\ \emph{Remark} \ref{derivation}, \S \ref{s2s2}); this philosophy is also related to what Borger emphasized in \cite{Borger}\footnote{Let us quote his argument in p.3, \cite{Borger}: ``\emph{the combinatorial nature is not built into the foundation of the theory--- it is a consequence of hard arithmetic results in presence of finiteness conditions.}'' (Here ``the theory'' is $\Lambda$-algebraic geometry \cite{Borger}.) In our context, the ``combinatorial nature'' corresponds to automata-theoretic objects; and what we are doing here is to relate combinatorial and geometric structures via rigidity (or descent). So we are concerned with both objects (actually the former).}. In this indirect way, discrete structures can appear with rich background structures; and as we  carefully discuss below, it seems reasonable to expect that this philosophy will apply, more or less, to further complicated computational dynamical systems beyond finite automata, actually even to \emph{Turing complete} ones.

Conventionally, the dynamics of higher computational models such as Turing machines is defined using some external register memories (such as Turing tapes); and it is the interactions with such external devices that make their behavior more complicated (or \emph{non-linear}) than finite automata: On one hand, the behavior of a finite automaton $\idA=(S,\delta,s_0)$ over an alphabet $\Delta$ is defined by the transition function $\delta: S \times \Delta \rightarrow S$; namely, if the automaton $\idA$ is at a state $s \in S$ and reads an input $a \in \Delta$, then the next state $s' \in S$ is given by $s'=\delta(s,a)$ (cf.\ \emph{Remark} \ref{dfa as graph}, \S \ref{s3s1}). Therefore, the behavior of a finite automaton is determined only by its current state $s \in S$ and the input $a \in \Delta$ that the automaton is reading now. On the other hand, the behavior of Turing machines (cf.\ \cite{Sipser,Hopcroft_Ullman}) is more complicated due to their interactions with their tapes: To be more specific, because (some version of) a Turing machine not only reads input from the tape but also rewrites it, and the pointer on the tape can move both to the left and right, the Turing machine may read a symbol which he wrote at his past computational stage. In other words, Turing machines can be influenced by their own past behaviors, which is a key machinery to realize recursive procedures. There are plenty of seemingly distinct versions even for Turing machines (using e.g.\ multiple tapes or registers of $2$-dimentional form); but they are equivalent, and therefore, we should pay a special attention not to individual implementations but to what we are actually doing with these models. 

We note that, in this informal exposition, we intentionally emphasized the term ``interactions'' in order to highlight a mechanism that one can observe more or less commonly in several distinct (Turing complete) computational models, such as \emph{cellular automata} in particular (at least if we widely construe the meaning of ``interactions''). In fact, as we saw in the case of Turing machines, (some sort of) interactions with their own past behaviors play a key role for computational models to realize recursive procedures, so as to be Turing complete; this viewpoint is more explicit in the definition of cellular automata, where each cell interacts with its neighboring cells; and the effect of this interaction transmits from cells to cells; by this machinery, cellular automata can implement the functions of Boolean circuits, so that they can be Turing complete (cf.\ \cite{cell}). Albeit at least in a naive conceptual level, we see that the basic idea behind these discrete dynamical systems can be compared to that behind physical systems, i.e.\ the interactions between physical entities (either of microscopic or macroscopic scales); and what we want to say with this lengthy conceptual argument is that we shall take this analogy seriously.

In fact, not just at a conceptual level, our Christol theorem can be regarded as a result of this direction, where ``differentiation'' is formalized by the concept of arithmetic derivation due to Buium (\emph{Remark} \ref{derivation}, \S \ref{s2s2}) in particular. From this viewpoint, our result concerns the case of integral solutions $\xi$ in the ring $W_{O_K}(O_{\bar{K}})$ of Witt vectors; and they can be also seen as solutions of systems of simple \emph{linear} differential (or difference) equations due to their automaticity. The major reason why we especially highlighted this viewpoint here is that (i) it can be seamlessly extended from linear to non-linear case, and (ii) this extension from linear to non-linear case seems to capture some essential aspect of the extension from the dynamics of finite automata to that of more general computational models in the sense that we discussed with Turing machines above\footnote{The situation in the study of non-linear differential equations may also explain why we need to have plenty of computational models. But in this relation, the idea of Malgrange's differential galois theory for non-linear differential equations \cite{Malgrange} may be important for our purpose here and its categorical axiomatization, where galois groups are replaced with groupoids. In view of that we replaced groups with monoids, this may naively suggest us to replace groupoids with categories.}. In order to make this naive viewpoint solid, we need to investigate whether Turing complete computational discrete dynamical systems can be represented by (some class of) non-linear arithmetic differential equations in some way, e.g.\ in such a way that naturally extends our Christol theorem beyond finite automata. To our naive thought, it will be possible to construct Turing complete dynamical systems in this way (i.e.\ as a consequence of rigidity of some geometric structures), in view of the empirical fact that Turing complete systems are everywhere\footnote{It may be meaningful to recall a solution of Hilbert's 10th problem based on proving that \emph{a set $A \subseteq \nat^n$ of tupples of natural numbers is Diophantine if and only if it is recursively enumerable} (i.e.\ can be enumerated by some Turing machine). See \cite{Hilbert10}.}. In this relation, however, we need to note that the actual goal of such geometric representations of classical computational models is not to give such representations themselves; but to make computational hierarchies more transparent through their geometric representations. Otherwise, we will again end up with a diversity of distinct ad hoc representations, which even contradicts to our goal.
\bibliographystyle{elsarticle-num}
\bibliography{achristol}

\begin{thebibliography}{10}
\expandafter\ifx\csname url\endcsname\relax
  \def\url#1{\texttt{#1}}\fi
\expandafter\ifx\csname urlprefix\endcsname\relax\def\urlprefix{URL }\fi
\expandafter\ifx\csname href\endcsname\relax
  \def\href#1#2{#2} \def\path#1{#1}\fi

\bibitem{Uramoto16}
T.~Uramoto, Semi-galois categories {I}: the classical {E}ilenberg variety
  theory, in: 31st Annual ACM/IEEE Symposium on Logic in Computer Science
  (LICS'16), 2016, pp. 545--554.

\bibitem{Uramoto17}
T.~Uramoto, Semi-galois categories {I}: the classical {E}ilenberg variety
  theory, arXiv:1512.04389 (Extended version of LICS'16 paper) (2017).

\bibitem{Christol}
G.~Christol, Ensembles presque periodiques $k$-reconnaissables, Theoretical
  Computer Science 9~(1) (1979) 141--145.

\bibitem{CKFG}
G.~Christol, T.~Kamae, M.~France, G.~Gauzy, Suites algebriques, automates et
  substitutions, Bulletin de la Societe Mathematique de France 108 (1980)
  401--419.

\bibitem{Kedlaya}
K.~Kedlaya, Finite automata and algebraic extensions of function fields, J.
  Theor. Nombres Bordeaux 18 (2006) 379--420.

\bibitem{Cobham}
A.~Cobham, On the {H}artmanis-{S}tearns problem for a class of tag machines,
  in: IEEE Conference Record of 1968 Ninth Annual Symposium on Switching and
  Automata Theory, 1968, pp. 51--60.

\bibitem{Hartmanis_Stearns}
J.~Hartmanis, R.~Stearns, On the computational complexity of algorithms, Trans.
  Amer. Math. Soc. 117 (1965) 285--306.

\bibitem{Adamczewski_Bugeaud1}
B.~Adamczewski, Y.~Bugeaud, On the complexity of algebraic numbers {I}.
  {E}xpansions in integer bases, Ann. of Math. 165 (2007) 547--565.

\bibitem{Adamczewski_Bugeaud2}
B.~Adamczewski, Y.~Bugeaud, On the complexity of algebraic numbers {II}.
  {Continued fractions}, Acta Math. 195 (2005) 1--20.

\bibitem{Allouche_Thakur}
J.-P. Allouche, D.~S. Thakur, Automata and transcendence of the {T}ate period
  in finite characteristic, Proceedings of the American Mathematical Society
  127~(5) (1999) 1309--1312.

\bibitem{Allouche_Shallit}
J.-P. Allouche, J.~Shallit, Automatic sequences: theory, applications,
  generalizations, Cambridge University Press, 2003.

\bibitem{Borger1}
J.~Borger, The basic geometry of {W}itt vectors {I}, the affine case, Algebra
  and Number Theory 5~(2) (2011) 231--285.

\bibitem{Borger_Smit1}
J.~Borger, B.~de~Smit, Galois theory and integral models of $\lambda$-rings,
  Bulletin of London Mathematical Society 40~(3) (2008) 439--446.

\bibitem{Borger_Smit2}
J.~Borger, B.~de~Smit, Lambda actions of rings of integers, arXiv:1105.4662
  (2011).

\bibitem{Deligne_Ribet}
P.~Deligne, K.~Ribet, Values of abelian $l$-functions at negative integers over
  totally real fields, Invent. Math. 59~(3) (1980) 227--286.

\bibitem{Borger_hand}
J.~Borger, Periodic {W}itt vectors and class field theory,
  http://streams1.nts.jhu.edu/mathematics/jami2011/jami2011pdf/Borger.pdf
  (2011).

\bibitem{Eilenberg}
S.~Eilenberg, Automata, Languages, and Machines, Academic Press, Inc., 1974.

\bibitem{Borger}
J.~Borger, Lambda-rings and the field with one element, arXiv:0906.3146 (2009).

\bibitem{Pin}
J.-E. Pin, Mathematical foundations of automata theory,
  https://www.irif.fr/~jep/PDF/MPRI/MPRI.pdf.

\bibitem{Buium}
A.~Buium, Geometry of $p$-jets, Duke Mathematical Journal 82~(2) (1996)
  349--367.

\bibitem{Bridy}
A.~Bridy, Automatic sequences and curves over finite fields, Algebra and Number
  Theory 11~(3) (2017) 685--712.

\bibitem{Speyer}
D.~Speyer, Christol's theorem and the {C}artier operator, secret Blogging
  Seminr,
  https://sbseminar.wordpress.com/2010/02/11/christols-theorem-and-the-cartier-operator/
  (2010).

\bibitem{Sipser}
M.~Sipser, Introduction to the Theory of Computation, International Thomoson
  Publishing, 1996.

\bibitem{Hopcroft_Ullman}
J.~E. Hopcroft, R.~Motowani, J.~D. Ullman, Introduction to Automata Theory,
  Languages and Computation (3rd Edition), Addison-Wesley Longman Publishing
  Co., Inc., 2006.

\bibitem{Abramsky}
S.~Abramsky, What are the fundamental structures of concurrency? {W}e still
  don't know!, Electronic Notes in Theoretical Computer Science 162 (2006)
  37--41.

\bibitem{Malgrange}
B.~Malgrange, On nonlinear differential galois theory, Chinese Annals of
  Mathematics (2002) 219--226.

\bibitem{Buium_geometry}
A.~Buium, Foundations of Arithmetic Differential Geometry, American
  Mathematical Society, 2017.

\bibitem{Buium_differential}
A.~Buium, Arithmetic Differential Equations, American Mathematical Society,
  2005.

\bibitem{Turing}
A.~M. Turing, On computable numbers, with an application to the
  {E}ntcheidungsproblem, Proceedings of the London Mathematical Society 2~(42)
  (1936) 230--265.

\bibitem{cell}
M.~Cook, Universality in elementary cellular automata, Complex Systems 15
  (2004) 1--40.

\bibitem{Hilbert10}
Y.~Matiyasevich, Hilbert's 10th problem, The MIT Press, 1993.

\end{thebibliography}

\end{document}